\documentclass[11pt]{article}
\usepackage{amsmath}
\usepackage{amsthm}
\usepackage{latexsym}
\usepackage{tikz}
\usetikzlibrary{automata,positioning}
\usetikzlibrary{chains,fit,shapes}
\usetikzlibrary{calc}
\usetikzlibrary{arrows}
\usepackage{time}             % date and time
\usepackage{graphicx, epsfig}
\usepackage[T1]{fontenc}      % european characters
\usepackage{amssymb,amsmath}  % use mathematical symbols
\usepackage{palatino}         % use palatino as the default font
\usepackage{multimedia}
\usepackage{subfigure}
\usepackage{mathrsfs}
\usepackage[usenames,dvipsnames]{pstricks}
 \usepackage{epsfig}
 \usepackage{pst-grad} % For gradients
\usepackage{pst-plot} % For axes
\usetikzlibrary{fit,matrix}
\tikzset{
mN/.style = {%myNode
    draw=#1, semithick, inner sep=0pt}
             }

\oddsidemargin  0.5cm \evensidemargin 0cm \topmargin 0.5cm
\headheight 0pt \textheight 22cm \textwidth 15cm

\newtheorem{theorem}{Theorem}[section]
\newtheorem{definition}[theorem]{Definition}
\newtheorem{lemma}[theorem]{Lemma}
\newtheorem{remark}[theorem]{Remark}
\newtheorem{proposition}[theorem]{Proposition}

\newtheorem{example}[theorem]{Example}

\def\eref#1{(\ref{#1})}

\title{Oriented Riordan graphs and their fractal property\thanks{This work was supported by the National Research Foundation of Korea (NRF) grant funded by the Ministry of Education of Korea (NRF-2019R1I1A1A01044161).}}
\date{}
\author{Ji-Hwan Jung\\{\footnotesize \textit{Center for Educational Research, Seoul National University, Seoul 08826, Republic of Korea}}
\\
%{\footnotesize gscheon@skku.edu},\,
{\footnotesize jihwanjung@snu.ac.kr} }

\begin{document}

\maketitle

\begin{abstract}
In this paper, we use the theory of Riordan matrices to introduce
the notion of an oriented Riordan graph. The oriented Riordan graphs
are a far-reaching generalization of the well known and well studied
Toeplitz oriented graphs and tournament. The main focus in this
paper is the study of structural properties of the oriented Riordan
graphs which includes a fundamental decomposition theorem and
fractal property. Finally, we introduce the generalization of the oriented Riordan graph who is called a $p$-Riordan graph.

 \bigskip

 \noindent
 {\bf Key Words:} oriented Riordan graph, graph decomposition, fractal, $p$-Riordan graph \\[3mm]
 %\\
 {\bf 2010 Mathematics Subject Classification:} 05C20, 05A15, 28A80
\end{abstract}

\section{Introduction}

An {\it oriented graph} is a directed graph having no symmetric pair
of directed edges. Let $G^\sigma$ be a simple graph with an
orientation $\sigma$, which assigns to each edge a direction so that
$G^\sigma$ becomes a directed graph \cite{SW}. With respect to a
labeling, the {\it skew-adjacency matrix} $\mathcal{S}(G^\sigma)$ is
the $(-1,0,1)$-real skew symmetric matrix $[s_{ij}]_{1\le i,j\le n}$
where $s_{ij} = 1$ and $s_{ji}=-1$ if $i\rightarrow j$ is an arc of
$G^\sigma$, otherwise $s_{ij}=s_{ji}= 0.$ A complete oriented graph
is called a {\it tournament}.

An oriented graph $G^\sigma_n$ with $n$ vertices is {\em Riordan} if
there exists a labeling $1,2,\ldots,n$ of $G^\sigma_n$ such that the
lower triangular part of order $n-1$ of the skew-adjacency matrix
$S(G^\sigma_n)$ is of size $n-1$ of some Riordan array
$(g,f)=[\ell_{ij}]_{i,j\ge0}$ over the finite field ${\mathbb Z}_3$
defined as
\begin{eqnarray}\label{e:a
sam}
 \ell_{ij}\equiv[z^i]gf^j\;({\rm mod}\; 3)\;{\rm and}\;\ell_{i,j}\in\{-1,0,1\}
\end{eqnarray}
where $g=\sum_{n\ge0}g_nz^n$ and $f=\sum_{n\ge1}f_nz^n$ are formal
power series over integers ${\mathbb Z}$. By using Riordan language,
the skew-adjacency matrix $\mathcal{S}(G^\sigma_n)$ can be written
as
\begin{eqnarray*}
\mathcal{S}(G^\sigma)\equiv (zg,f)_n-(zg,f)_n^T\;({\rm mod 3})
\end{eqnarray*}
where $2\equiv -1$ ({\rm mod 3). We denote such graph by
$G^\sigma_n(g,f)$, or simply by $G^\sigma_n$ when the Riordan array
$(g,f)$ is understood from the context, or it is not important. For
Riordan graphs, see ~\cite{CJKM1,CJKM2}.

Note that every Riordan array $(g,f)$ over ${\mathbb Z}$ defines the
oriented Riordan graph $G^\sigma(g,f)$ with respect to the labeling
with the same ones as column indices of the the Riordan array
$(g,f)$. For instance, consider the Pascal array given by
$(1/(1-z),z/(1-z))$. The corresponding skew-adjacency matrix  of
order 7 and its oriented Riordan graph
$G_7^\sigma=G^\sigma_7(1/(1-z),z/(1-z))$ are illustrated in Figure
\ref{PG_7}.

\begin{figure}
\begin{center}
\epsfig{file=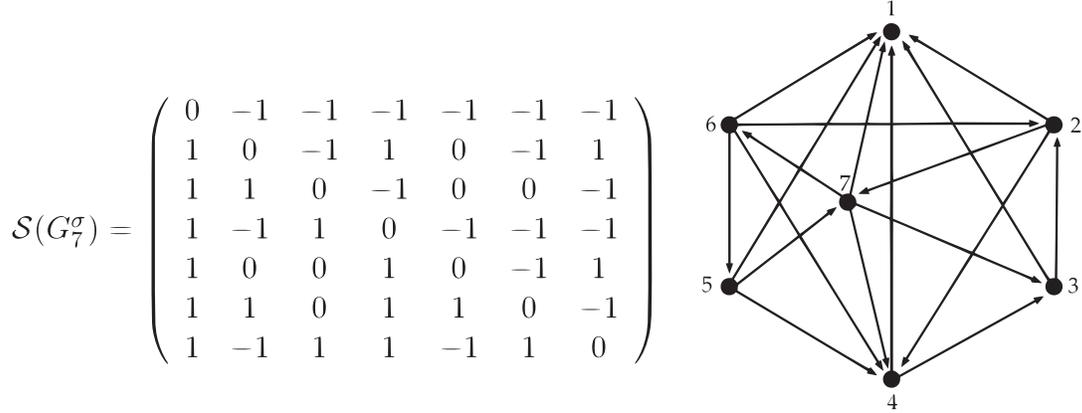,scale=0.18}
\end{center}
\caption{$\mathcal{S}(G^\sigma_7)$ and
$G^\sigma_7=G^\sigma_7(1/(1-z),z/(1-z))$}\label{PG_7}
\end{figure}

Throughout this paper, we write $a\equiv b$ for $a\equiv b\;({\rm
mod\;3})$.

\begin{proposition}\label{num-graphs} The number of oriented Riordan graphs on $n$
vertices is
\begin{align*}
{3^{2(n-1)}+3\over 4}.
\end{align*}
\end{proposition}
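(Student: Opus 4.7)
The plan is to identify each oriented Riordan graph on $n$ vertices with the lower-triangular array of signs produced by a Riordan array $(g,f)$ reduced modulo $3$, and then count the distinct such triangles directly. Since $\mathcal S(G^\sigma_n)$ is skew-symmetric with zero diagonal, the graph is determined by its strict lower triangle, which by definition equals the lower triangle of size $n-1$ of $(g,f)$, namely the entries $\ell_{ij}\equiv[z^i]gf^j\pmod 3$ with $0\le j\le i\le n-2$. These entries depend only on $g_0,g_1,\dots,g_{n-2}$ and $f_1,f_2,\dots,f_{n-2}$, so I would regard the parameters as ranging over $\mathbb Z_3^{2n-3}$ and let $T(n)$ denote the number of distinct triangles that arise. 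The proof then reduces to counting the fibres of this parameterisation.

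The key step is to split the count according to whether $g_0\equiv 0\pmod 3$. In the case $g_0\not\equiv 0$, I plan to show that the map $(g_0,g_1,\dots,g_{n-2},f_1,\dots,f_{n-2})\mapsto(\ell_{ij})$ is injective: the first column recovers $g_i=\ell_{i,0}$, and since $\ell_{k,1}=g_0f_k+g_1f_{k-1}+\cdots+g_{k-1}f_1$ with $g_0$ a unit in $\mathbb Z_3$, the coefficients $f_1,f_2,\dots,f_{n-2}$ can be solved for recursively from the second column, contributing exactly $2\cdot 3^{2(n-2)}=2\cdot 9^{n-2}$ graphs. In the complementary case $g_0\equiv 0$, I would write $g=z\tilde g$ with $\tilde g\in\mathbb Z_3[[z]]$ arbitrary; then $\ell_{ij}=\tilde\ell_{i-1,j}$ for $i\ge 1$, where $\tilde\ell$ is the Riordan array attached to $(\tilde g,f)$, while $\ell_{0,0}=0$ and $\ell_{ii}=g_0f_1^i=0$ are forced. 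As $(\tilde g,f)$ varies, the nontrivial entries sweep out precisely the lower triangle of size $n-2$ of $(\tilde g,f)$, which is the defining data of an oriented Riordan graph on $n-1$ vertices, contributing $T(n-1)$ graphs disjoint from the first case by the value of $\ell_{0,0}$.

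Combining both cases yields the recursion $T(n)=T(n-1)+2\cdot 9^{n-2}$ for $n\ge 2$ with base $T(1)=1$, and telescoping gives
\[
T(n)=1+2\sum_{k=2}^{n}9^{k-2}=\frac{9^{n-1}+3}{4}=\frac{3^{2(n-1)}+3}{4}.
\]
I expect the main obstacle to be the injectivity claim in the case $g_0\not\equiv 0$: one must carefully verify that every free parameter is captured by the first two columns of the triangle, with no hidden dependence on higher columns or on nontrivial relations among products of the $f_k$. Once that is in place, the recursion is a formal consequence of the shift $g=z\tilde g$ and the closed form is a one-line geometric-series calculation.
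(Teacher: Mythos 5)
Your proposal is correct and follows essentially the same route as the paper: both arguments stratify by the order of vanishing of $g$ (you do this one step at a time via the recursion $T(n)=2\cdot 9^{n-2}+T(n-1)$, which unrolls to the paper's direct sum $1+2\sum_{i=0}^{n-2}3^{2(n-i-2)}$ over the smallest index $i$ with $g_i\not\equiv 0$), and both identify the null graph as the residual case. Your explicit verification that the triangle determines $(g,f)$ when $g_0\not\equiv 0$ (recovering $g$ from column $0$ and $f$ from column $1$ using that $g_0$ is a unit in $\mathbb{Z}_3$) is a welcome addition that the paper leaves implicit.
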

\begin{proof} Let $G_n^\sigma=G_n^\sigma(g,f)$ be an oriented Riordan graphs on $n\ge2$
vertices and $i$ be the smallest index such that
$g_i=[z^i]g\not\equiv 0$.

\begin{itemize}
\item If $i\ge n-1$ then $G_n^\sigma$ is the null graph $N_n$.
\item If $0\le i\le n-2$ then we may assume that $g=\sum_{k=i}^{n-2}g_kz^k$ and $f=\sum_{k=1}^{n-2-i}g_kz^k$.
\end{itemize}
Since $g_i\in\{-1,1\}$ and $g_{i+1},\ldots,g_{n-2},f_1,\ldots,
f_{n-i-2}\in\{-1,0,1\}$ it follows that the number of possibilities
to create $n\times n$ skew-adjacency matrix $S(G^\sigma_n)$ is
\begin{align*}
1+2\sum_{i=0}^{n-2}3^{2(n-i-2)}={3^{2(n-1)}+3\over 4}
\end{align*}
where the 1 corresponds to the null graph. \end{proof}

\begin{remark}{\rm It is known \cite{oeis} that the numbers
$1,3,21,183,1641,\ldots,(3^{2(n-1)}+3)/4,\ldots$ count closed walks
of length $2n\;(n\ge1)$ along the edges of a cube based at a vertex.
The numbers are also equal to the numbers of words of length
$2n\;(n\ge1)$ on alphabet $\{0,1,2\}$ with an even number (possibly
zero) of each letter. See the OEIS number A054879.}
\end{remark}

\begin{figure}
\begin{center}
\epsfig{file=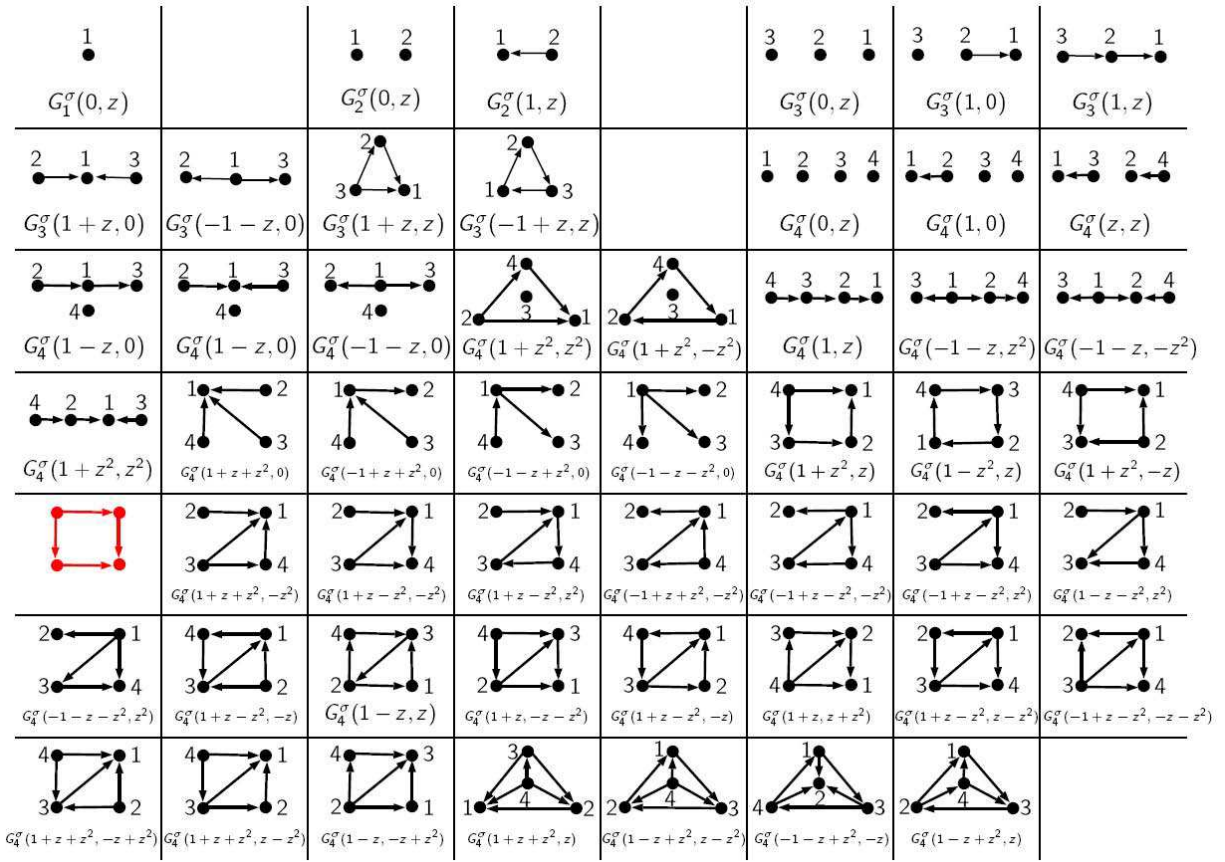,scale=0.5}
\end{center}
\caption{Vertex labeling on nonisomorphic oriented graphs of order
up to 4}\label{Riordan-graphs-up-to-4}
\end{figure}

From Figure \ref{Riordan-graphs-up-to-4}, we can see that the
following graph is the only nonisomorphic oriented graph of order up
to 4 who is not Riordan.
\begin{center}
\epsfig{file=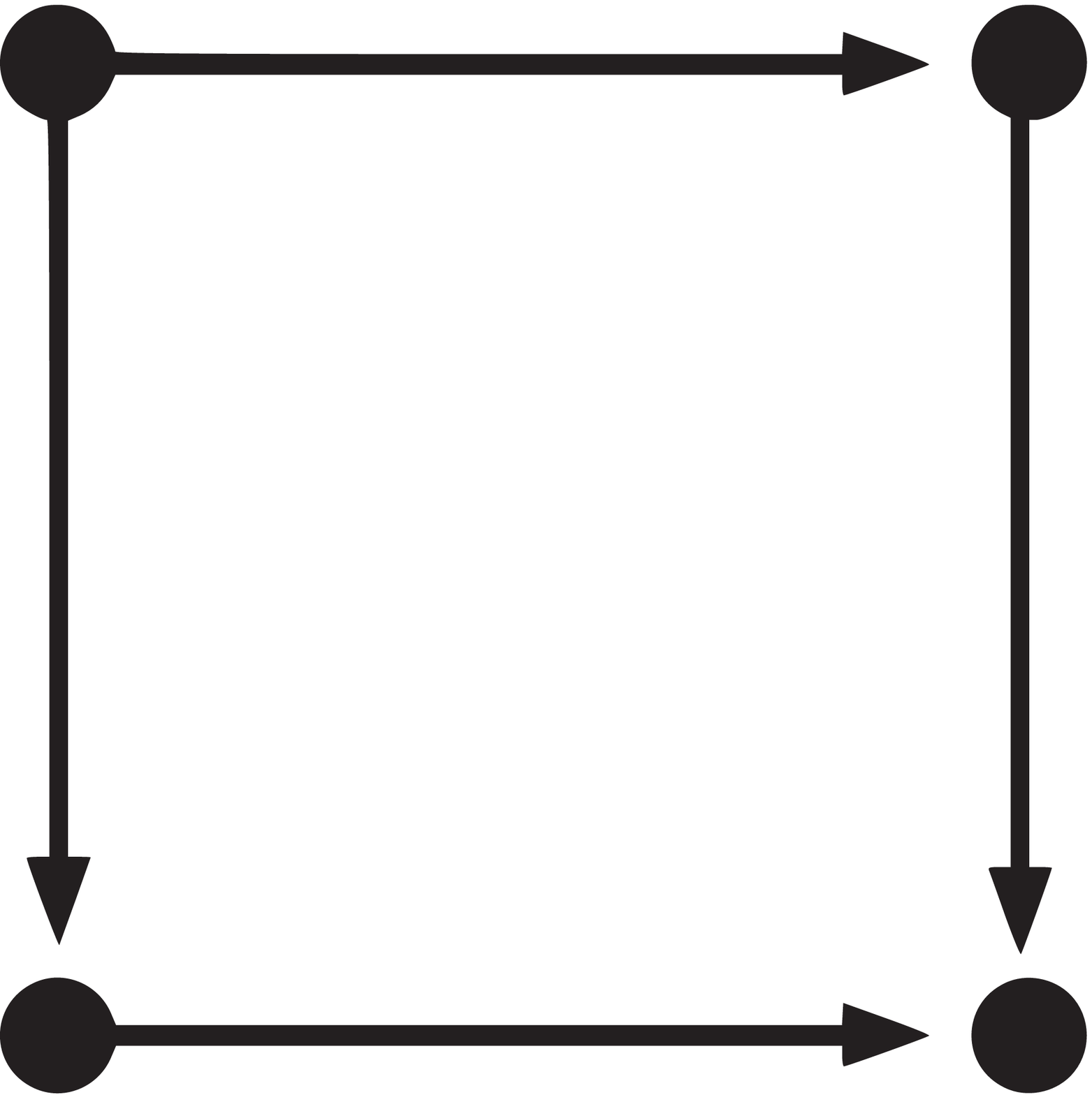,scale=0.15}
\end{center}
Thus the following proposition shows that not all nonisomorphic
oriented graphs on $n$ vertices are Riordan for $n\geq 4$.

\begin{proposition}\label{non-oriented-Riordan-graph} Let $H_n\cong K_{n-1}\cup K_1$ be a graph obtained from a complete graph $K_{n-1}$ by
adding an isolated vertex. Then any orientation of $H_{n}$ for
$n\ge5$ is not an oriented Riordan graph.
\end{proposition}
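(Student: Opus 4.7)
The plan is to argue by contradiction: suppose some labeling $1,2,\dots,n$ realizes $H_n$ as an oriented Riordan graph $G^\sigma_n(g,f)$ and derive a conflict from the location of the unique isolated vertex. First I translate $\mathcal S(G^\sigma_n)\equiv (zg,f)_n-(zg,f)_n^T$ into coordinates: for $1\le j<i\le n$ the strictly lower triangular entries satisfy $S_{i,j}\equiv \ell_{i-2,j-1}$, where $\ell_{r,s}=[z^r]gf^s$. Two consequences will be used repeatedly: the first column reads $S_{i,1}\equiv g_{i-2}$ and, using the identity $\ell_{v,v}=g_0f_1^{\,v}$, the main subdiagonal reads $S_{v+1,v}\equiv g_0f_1^{\,v-1}$, a geometric sequence in $f_1$.

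Let $k$ denote the label of the isolated vertex and split on $k$; the hypothesis $n\ge 5$ will be used only to guarantee existence of specific witness edges among $\{1,2,3,4,5\}$. If $k=1$, then $S_{i,1}\equiv 0$ for $2\le i\le n$ forces $g_0\equiv g_1\equiv\cdots\equiv g_{n-2}\equiv 0$, hence every $\ell_{r,s}$ with $r\le n-2$ vanishes and $G^\sigma_n$ is the null graph, not $H_n$. If $k=2$, then $S_{2,1}\equiv g_0\equiv 0$, yet the edge $3$--$4$ in $K_{n-1}$ demands $S_{4,3}\equiv g_0f_1^{\,2}\not\equiv 0$, contradiction. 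If $k=3$, then the edge $1$--$2$ gives $g_0\not\equiv 0$ while $S_{3,2}\equiv g_0f_1\equiv 0$ forces $f_1\equiv 0$; but the edge $4$--$5$, which exists because $n\ge 5$, requires $S_{5,4}\equiv g_0f_1^{\,3}\not\equiv 0$, contradiction. Finally, if $k\ge 4$, the triangle on $\{1,2,3\}$ forces $g_0,g_1,f_1\not\equiv 0$, and then the isolation of vertex $k$ demands vanishing of $S_{k+1,k}\equiv g_0f_1^{\,k-1}$ when $k<n$, or of $S_{n,n-1}\equiv g_0f_1^{\,n-2}$ when $k=n$, both impossible.

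The only care required in executing the plan is the index bookkeeping between $1$-indexed vertex labels and $0$-indexed Riordan array coordinates, together with the routine check that the edge or non-edge invoked in each branch actually lies in $H_n$ under the hypothesis $n\ge 5$. Once the dictionary $S_{i,j}\leftrightarrow \ell_{i-2,j-1}$ and the diagonal identity $\ell_{v,v}=g_0f_1^{\,v}$ are pinned down, each case reduces to a direct conflict between a zero forced by the isolated vertex and a nonzero forced by an edge of $K_{n-1}$, so no lengthy computation is needed.
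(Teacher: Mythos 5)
Your proof is correct and takes essentially the same route as the paper's: a case analysis on the label of the isolated vertex, extracting a contradiction from the zero/nonzero pattern that isolation forces on the subdiagonal entries $s_{v+1,v}\equiv \ell_{v-1,v-1}=g_0f_1^{\,v-1}$ of the skew-adjacency matrix. You merely make explicit the geometric-progression identity $\ell_{v,v}=g_0f_1^{\,v}$ (and the role of $n\ge 5$ in the $k=3$ case) that the paper leaves implicit when it asserts its final contradiction.
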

\begin{proof} Let $H_{n}^\sigma$ be any oriented graph of $H_n$ with an orientation $\sigma$. Suppose that there exist $g$ and $f$ such that a labelled copy
of $H_{n+1}^\sigma$ is the oriented Riordan graph $G_n^\sigma(g,f)$.

Let the isolated vertex be labelled by 1. Since there are no arcs
$1\rightarrow i$ and $i\rightarrow 1\in E(H_{n}^\sigma)$ for
$i=2,\ldots,n$, we have $g=0$ so that $G_n^\sigma(g,f)$ is the null
graph $N_n$. This is a contradiction.

Let $i\neq1$ be the label of the isolated vertex and
$\mathcal{S}(H_{n}^\sigma)=[s_{i,j}]_{1\leq i,j\leq n}$. Then we
obtain
\begin{align*}
(s_{2,1},\ldots,s_{n,n-1})=\left\{
\begin{array}{ll}
(a_1,\ldots,a_{i-2},0,0,a_{i+1},\ldots,a_{n-1})
 & \text{if $i\in\{2,\ldots,n-1\};$} \\
(a_1,\ldots,a_{n-2},0) & \text{if $i=n$}%
\end{array}
\right.
\end{align*}
where $a_{i}\in\{-1,1\}$. This implies that
\begin{itemize}
\item $[z^{i-1}]gf^{i-1}\equiv0$
and $[z^{i}]gf^{i}\not\equiv0$ if $i\in\{2,\ldots,n-2\}$;
\item $[z^{n-4}]gf^{n-4}\not\equiv0$
and $[z^{n-3}]gf^{n-3}\equiv0$ if $i=n-1$;
\item $[z^{n-3}]gf^{n-3}\not\equiv0$
and $[z^{n-2}]gf^{n-2}\equiv0$ if $i=n$.
\end{itemize}
This is also a contradiction. Hence the proof follows.
\end{proof}

\section{Riordan arrays}

 Let $\kappa[[z]]$ be the ring of formal power series in the variable $z$ over an
integral domain $\kappa$.
 If there exists a pair of generating functions $(g,f)\in \kappa[[z]]\times \kappa[[z]]$, $f(0)=0$ such
 that for $j\ge 0$,
\begin{eqnarray*}
 gf^j=\sum_{i\ge0}\ell_{i,j}z^i,
\end{eqnarray*}
then the matrix  $L=[\ell_{ij}]_{i,j\ge0}$ is called a  {\it Riordan
matrix} (or, a {\it Riordan array})  over $\kappa$ generated by $g$
and $f$. Usually, we write $L=(g,f)$. Since $f(0)=0$, every Riordan
matrix $(g,f)$ is infinite and a lower triangular matrix. If a
Riordan matrix is invertible, it is called {\it proper}. Note that
$(g,f)$ is invertible if and only if $g(0)\ne0$, $f(0)=0$ and
$f^\prime(0)\ne0$.

If we multiply $(g,f)$ by a column vector $(c_0,c_1,\ldots)^T$ with
the generating function $\Phi$ over an integral domain $\kappa$ with
characteristic zero, then the resulting column vector has the
generating function $g\Phi(f)$.
%{\blue In this situation we say $(g,f)\Phi=g\Phi(f)$.}
This property is known as the {\it fundamental theorem of Riordan
matrices} ({\it FTRM}). Simply, we write the FTRM as
$(g,f)\Phi=g\Phi(f)$. This leads to the multiplication of Riordan
matrices, which can be described in terms of generating functions as
\begin{eqnarray}\label{e:Rm}
(g,f)*(h,\ell)=(gh(f),\ell(f)).
\end{eqnarray}
The set of all proper Riordan matrices under the above {\em Riordan
multiplication} forms a group called the \textit{Riordan group}. The
identity of the group is $(1,z)$, the usual identity matrix and
$(g,f)^{-1}=({1/g(\overline{f})},\overline{f})$ where $\overline{f}$
is the {\em compositional inverse} of $f$, i.e.\
$\overline{f}(f(z))=f(\overline{f}(z))=z$.

%In a particular case of $\kappa=\mathbb{Z}_2$, a Riordan matrix $(g,f)$ is defined by
The {\em leading principal matrix of order $n$} of $(g,f)$ is
denoted by $(g, f)_n$. If $\kappa={\mathbb Z}$ then the fundamental
theorem gives
\begin{align}\label{e:ftbrm}
(g,f)\Phi\equiv g\Phi(f).
\end{align}

It is known \cite{MRSV} that an infinite lower triangular matrix
$L=[\ell_{i,j}]_{i,j\ge0}$ is a Riordan matrix $(g,f)$ with
$[z^1]f\neq0$ if and only if there is a unique sequence
$(a_0,a_1,\ldots)$ with $a_0\neq0$ such that, for $i\ge j\ge0$,
\begin{align*}
\ell_{i,0}&=[z^i]g;\\
\ell_{i+1,j+1}&=a_0\ell_{i,j}+a_1\ell_{i,j+1}+\cdots+a_{i-j}\ell_{i,i}.
\end{align*}
This sequence is called  the {\it $A$-sequence} of the Riordan
array. Also, if $L=(g,f)$ then
\begin{align}\label{e:eq12}
f=zA(f),\quad{\rm or\ equivalently}\quad A=z/\bar f
\end{align}
where $A$ is the generating function of the $A$-sequence of $(g,f)$.
%and $\bar f$ is the {\em compositional inverse} of $f$, that is $f(\bar f)=\bar f(f)=z$.
In particular, if $L$ is a Riordan array $(g,f)$ over ${\mathbb
Z}_3$ with $f'(0)=1$ then the sequence is called the {\it ternary
$A$-sequence} $(1,a_1,a_2,\ldots)$ where $a_k\in\{-1,0,1\}$.

\section{Structural properties of oriented Riordan graphs}

A {\em fractal} is an object exhibiting similar patterns at
increasingly small scales. Thus, fractals use the idea of a detailed
pattern that repeats itself.

%\begin{definition} A graph is a {\em fractal} if its adjacency matrix is a fractal. \end{definition}
%
In this section, we show that every oriented Riordan graph
$G_n^\sigma(g,f)$ with $f'(0)=1$ has fractal properties by using the
notion of the $A$-sequence of a Riordan matrix. The set of labelings
$1,2,\ldots,n$ of the graph $G_n^\sigma(g,f)$ is`denoted as $[n]$.

\begin{definition}{\rm Let $G^\sigma$ be an oriented Riordan graph. A pair of vertices $\{k,t\}$ in $G^\sigma$ is
a {\em cognate pair} with a pair of vertices $\{i,j\}$ in $G^\sigma$
if
\begin{itemize}
\item $|i-j|=|k-t|$ and
\item $i\rightarrow j$ is an arc of $G^\sigma$ if and only if $k\rightarrow t$ is an arc of $G^\sigma$.
\end{itemize}
The set of all cognate pairs of $\{i,j\}$ is denoted by cog$(i,j)$.}
\end{definition}

\begin{lemma}\label{p-power}
Let $g,f\in\mathbb{Z}[[z]]$ with $f(0)=0$. For a prime $p$, we
obtain
\begin{eqnarray*}
g^{p^k}(f)\equiv g(f^{p^k})\;({\rm mod}\;p) \;\textrm{for any
integer $k\ge0$.}
\end{eqnarray*}
\end{lemma}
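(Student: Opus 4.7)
The plan is to reduce the claim to the Frobenius-type identity
\[
h(z)^{p^k} \equiv h(z^{p^k}) \pmod{p}
\]
valid for every $h \in \mathbb{Z}[[z]]$, and then substitute $z=f$. I read $g^{p^k}(f)$ as the $p^k$-th power of $g(z)$ in the ring $\mathbb{Z}[[z]]$, evaluated at $f$; equivalently, this equals $\bigl(g(f)\bigr)^{p^k}$, since evaluation at $f$ is a ring homomorphism (legitimate because $f(0)=0$ makes formal substitution well-defined).

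First I would establish the base case $h(z)^p \equiv h(z^p) \pmod{p}$. Writing $h=\sum_{n\ge 0} a_n z^n$, the multinomial expansion gives
\[
\Bigl(\sum_{n\ge 0} a_n z^n\Bigr)^{\!p} \equiv \sum_{n\ge 0} a_n^p\, z^{np} \pmod{p},
\]
because every multinomial coefficient $\binom{p}{k_1,k_2,\ldots}$ with more than one positive $k_i$ is divisible by $p$. Fermat's little theorem then replaces $a_n^p$ by $a_n$ modulo $p$, yielding $h(z)^p \equiv h(z^p) \pmod{p}$. A short induction on $k$ — raising the hypothesis $h(z)^{p^k} \equiv h(z^{p^k}) \pmod{p}$ to the $p$-th power and applying the base case with $z$ replaced by $z^{p^k}$ — upgrades this to $h(z)^{p^{k+1}} \equiv h(z^{p^{k+1}}) \pmod{p}$.

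Finally I would specialize $h=g$ and substitute $z=f$ in the congruence $g(z)^{p^k}\equiv g(z^{p^k})\pmod{p}$. Since $g(z^{p^k})\big|_{z=f}=g(f^{p^k})$ by definition of composition of power series, this gives
\[
g^{p^k}(f) \equiv g(f^{p^k}) \pmod{p},
\]
which is the stated identity. The only subtle point is verifying that a mod-$p$ congruence between elements of $\mathbb{Z}[[z]]$ is preserved under substitution of $f$; this holds because $f(0)=0$ ensures each coefficient of the substituted series is a \emph{finite} integer sum, and reduction mod $p$ commutes with finite sums and products. There is no genuine obstacle here — the argument is a direct application of Frobenius together with Fermat's little theorem.
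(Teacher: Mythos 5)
Your proof is correct and complete. The paper states this lemma without any proof at all, so there is nothing to compare against; your route --- the Frobenius congruence $h(z)^p\equiv h(z^p)\pmod p$ obtained from the multinomial theorem plus Fermat's little theorem, an induction on $k$, and then substitution of $z=f$ (legitimate because $f(0)=0$ makes every coefficient of the composite a finite integer sum, so reduction mod $p$ commutes with the substitution) --- is the standard argument and supplies exactly the justification the author omitted.
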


Throughout this paper, we write $a\equiv b$ for $a\equiv b\;({\rm
mod\;3})$.

The following theorem gives a relationship between cognate pairs and
the $A$-sequence of a Riordan array.

\begin{theorem}\label{e:cognate} Let $G^\sigma_n(g,f)$ be an oriented Riordan graph of order $n$ where $f\ne z$ and $[z^1]f=1$.
If the ternary $A$-sequence of $(g,f)$ is of the form
\begin{eqnarray}\label{e:ta}
A=(a_k)_{k\ge0}=(1,\underbrace{0,\ldots,0}_{\ell\ge0 \;{\rm
times}},a_{\ell+1},a_{\ell+2},\ldots),\;\;a_0,a_{\ell+1}\not\equiv0
\end{eqnarray} then
\begin{align*}
\mbox{\em cog}(i,j)=\left\{\left\{i+m3^{s}, j+m3^{s}\right\}\ |\
i+m3^{s},\ j+m3^{s}\in[n]\right\}
\end{align*}
where $s\ge0$ is an integer such that
$3\lfloor(|i-j|-1)/3^s\rfloor\le \ell$.
\end{theorem}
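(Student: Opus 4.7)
The plan is to translate the cognate-pair condition into an equality of entries of the Riordan matrix $(g,f)$ modulo $3$ and then combine Lemma~\ref{p-power} with the shape of the ternary $A$-sequence. Writing $\ell_{i,j}=[z^{i}]gf^{j}$ and setting $a=i-2$, $b=j-1$, the identity $\mathcal{S}(G^{\sigma})\equiv (zg,f)_{n}-(zg,f)_{n}^{T}\pmod 3$ shows that for $i>j$ the arc $i\to j$ is recorded by the single entry $\ell_{a,b}\pmod 3$, and $a-b=|i-j|-1$. Two pairs sharing the same $|i-j|$ are thus cognate exactly when the corresponding Riordan entries agree modulo $3$, so it suffices to prove
\begin{equation*}
 \ell_{a+m3^{s},\,b+m3^{s}}\equiv \ell_{a,b}\pmod 3
\end{equation*}
whenever $s$ satisfies the stated hypothesis and both translated indices lie in $[n]$.

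The central tool is Lemma~\ref{p-power} applied to $f$: $f^{3^{s}}\equiv f(z^{3^{s}})\pmod 3$, and iteration gives $f^{m3^{s}}\equiv f^{m}(z^{3^{s}})$. Consequently the support of $f^{m3^{s}}$ lies on the lattice of multiples of $3^{s}$, with $[z^{q\cdot 3^{s}}]f^{m3^{s}}=[z^{q}]f^{m}$. Writing $gf^{b+m3^{s}}=(gf^{b})\cdot f^{m3^{s}}$ and extracting the coefficient of $z^{a+m3^{s}}$ yields
\begin{equation*}
 \ell_{a+m3^{s},\,b+m3^{s}}\equiv\sum_{q\ge m}[z^{q}]f^{m}\cdot\ell_{a-(q-m)3^{s},\,b}\pmod 3.
\end{equation*}
The $q=m$ summand contributes exactly $\ell_{a,b}$ because $[z^{m}]f^{m}=1$, so what remains is to kill every tail term with $q>m$.

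Here the shape of the $A$-sequence is decisive. Substituting $A(z)=1+a_{\ell+1}z^{\ell+1}+\cdots$ into $f=zA(f)$ yields $f=z+a_{\ell+1}z^{\ell+2}+\cdots$, hence $f^{m}=z^{m}+m\,a_{\ell+1}\,z^{m+\ell+1}+\cdots$ has no terms strictly between $z^{m}$ and $z^{m+\ell+1}$. The first potentially nonzero error term in the sum therefore carries the factor $\ell_{a-(\ell+1)3^{s},b}$, which vanishes as soon as $a-(\ell+1)3^{s}<b$; later tail terms $[z^{q}]f^{m}\cdot\ell_{a-(q-m)3^{s},b}$ with $q-m\ge\ell+2$ vanish for the same reason under weaker inequalities. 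Rewriting via $a-b=|i-j|-1$ converts the inequality into the floor-type bound on $s$ imposed in the theorem, completing the inclusion $\{i+m3^{s},j+m3^{s}\}\in\mathrm{cog}(i,j)$.

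The main obstacle I anticipate is the converse - that no stride finer than the prescribed $3^{s}$ produces a cognate pair. The natural attack is to identify, for a shift $d$ not of the form $m3^{s}$ with $s$ satisfying the bound, the first nonvanishing term of the analogous expansion and show that its contribution $m\,a_{\ell+1}\,\ell_{a-(\ell+1)3^{s'},b}$ is genuinely nonzero. This requires a base-$3$ case analysis on $|i-j|-1$ together with a non-cancellation check on the relevant Riordan entries, and is the most delicate step in the proof; the forward direction, by contrast, is essentially bookkeeping once the Freshman's-dream identity and the $A$-sequence hypothesis are in place.
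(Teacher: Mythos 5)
Your forward argument is correct and is essentially the paper's proof in different bookkeeping. The paper writes $f^{3^s}=(zA(f))^{3^s}\equiv z^{3^s}A(f^{3^s})$ (Lemma~\ref{p-power} applied to $A$ rather than to $f$) and derives the entry recurrence
\begin{equation*}
r_{i+3^s,j+3^s}\equiv\sum_{k=0}^{\lfloor(i-j-1)/3^s\rfloor}a_k\,r_{i,j+k3^s},
\end{equation*}
whose tail is killed by $a_1=\cdots=a_\ell=0$ together with lower triangularity, exactly as your convolution tail is killed by the coefficient gap $f=z+a_{\ell+1}z^{\ell+2}+\cdots$; the only cosmetic difference is that the paper shifts by a single $3^s$ and iterates, while you handle a general shift $m3^s$ in one step. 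The substantive remark concerns the converse, which you flag as ``the most delicate step'': the paper does not prove it, and as a set equality it is in fact false. For instance, take $g=1$ and $f=z/(1-z)$, so $A=(1,1,0,\ldots)$ and $\ell=0$; for the pair $\{4,1\}$ one must take $s\ge1$, yet $r_{4,1}\equiv[z^2]g\equiv0$ and $r_{6,3}\equiv[z^4]f^2\equiv 3\equiv0$, so $\{6,3\}$ is cognate with $\{4,1\}$ even though the shift $2$ is not a multiple of $3$. The theorem should therefore be read as the inclusion $\supseteq$, which is all the paper establishes and all your argument delivers; do not spend effort on the reverse inclusion. (A small reassurance on your last worry: no ``non-cancellation check'' is needed for the direction that is actually proved, since the inequality $\lfloor(|i-j|-1)/3^s\rfloor\le\ell$ already forces every tail term to vanish by lower triangularity alone.)
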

\begin{proof} Let $\mathcal{S}(G_n^\sigma)=(r_{i,j})_{1\leq i,j\leq n}$
be the skew-adjacency matrix of $G_n^\sigma(g,f)$. Without loss of
generality, we may assume that $i>j\ge1$. By Lemma \ref{p-power}, we
obtain \begin{align} r_{i+3^s,j+3^s}&\equiv\left[z^{i+3^s-2}\right]
gf^{j+3^s-1}=\left[z^{i+3^s-2}\right]
gf^{j-1}\left(zA(f)\right)^{3^s}\nonumber\\
&\equiv\left[z^{i-2}\right]gf^{j-1}A(f^{3^{s}})=\left[z^{i-2}\right]\sum_{k\ge0}a_kgf^{j-1+k3^s}\nonumber\\
&\equiv \sum_{k=0}^{\alpha}a_kr_{i,j+k3^s}\label{e:eq6}.
\end{align}
where $\alpha={\rm max}\{k\in\mathbb{N}_0\;|\;0\le k3^s\le
i-j-1\}=\lfloor(i-j-1)/3^s\rfloor$. Since $a_0=1$ and $a_k=0$ for
$1\le k\le \ell$, it follows that $\alpha\le \ell$ if and only if
\begin{align}\label{e:eq7}
r_{i+3^s,j+3^s}\equiv r_{i,j}.
\end{align}
Now, let $s\ge0$ be an integer with
$\lfloor(i-j-1)/3^s\rfloor\le\ell$. By \eref{e:eq7}, $i$ is
adjacency to $j$ with $i\rightarrow j$ in $G_n^\sigma$ if and only
if $i+3^s$ is adjacency to $j+3^s$ with $i+3^s\rightarrow j+3^s$ in
$G_n^\sigma$. It implies that $i+3^s$ is adjacency to $j+3^s$ with
$i\rightarrow j$ in $G_n^\sigma$ if and only if $i+2\cdot3^s$ is
adjacency to $j+2\cdot3^s$ with $i+2\cdot3^s\rightarrow j+2\cdot3^s$
in $G_n^\sigma$. By repeating this process, we obtain the desired
result.
\end{proof}

The following theorem shows that if $(g,f)$ is a Riordan array over
integers where $f\ne z$ and $f'(0)=1$ then every oriented Riordan
graph $G_n^\sigma(g,f)$ has a fractal property.

\begin{theorem}\label{ORG-fractal}
Let $G^\sigma_n(g,f)$ be the same oriented Riordan graph in Theorem
\ref{e:cognate}. If $a_0=1$ in \eref{e:ta} then $G_n^\sigma$ has the
following fractal properties for each $s\ge 0$ and
$k\in\{0,\ldots,\ell\}$:
\begin{itemize}
\item[{\rm(i)}] $\left<\{1,\ldots,(k+1)3^{s}+1\}\right>\cong
\left<\{\alpha(k+1)3^{s}+1,\ldots,(\alpha+1)(k+1)3^{s}+1\}\right>$
\item[{\rm(ii)}]$\left<\{1,\ldots,(k+1)3^{s}\}\right>\cong
\left<\{\alpha(k+1)3^{s}+1,\ldots,(\alpha+1)(k+1)3^{s}\}\right>$
\end{itemize}
where $\alpha\ge1$.
\end{theorem}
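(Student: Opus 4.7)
The plan is to exhibit, for each valid triple $(s, k, \alpha)$, the translation map $\phi : v \mapsto v + \alpha(k+1)3^{s}$ as the candidate graph isomorphism in both (i) and (ii). It is manifestly a bijection from the domain onto the target (an arithmetic progression of the same length translated by $\alpha(k+1)3^{s}$), so the entire content of the theorem reduces to showing that for every pair of distinct vertices $\{i,j\}$ in the domain block, the translated pair $\{\phi(i), \phi(j)\}$ lies in $\mbox{cog}(i,j)$. By the definition of a cognate pair, this automatically gives simultaneous preservation of adjacency and orientation, and so the induced subgraphs are isomorphic via $\phi$.

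To invoke Theorem \ref{e:cognate} with the single shift $m = \alpha(k+1)$, we must verify the floor condition $\lfloor(|i-j|-1)/3^{s}\rfloor \le \ell$ uniformly across the block. In case (i), the domain $\{1, \ldots, (k+1)3^{s}+1\}$ has diameter $(k+1)3^{s}$, so for any distinct $i, j$ in the block,
\begin{align*}
\left\lfloor \frac{|i-j|-1}{3^{s}} \right\rfloor \le \left\lfloor \frac{(k+1)3^{s}-1}{3^{s}} \right\rfloor = k \le \ell.
\end{align*}
In case (ii), the domain has strictly smaller diameter $(k+1)3^{s}-1$, and a direct check gives the same bound $k$ on the floor. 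Hence Theorem \ref{e:cognate} applies to every internal pair of the block simultaneously with the same $m = \alpha(k+1)$, which is exactly what is required for $\phi$ to be a graph isomorphism between the two induced subgraphs.

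The whole proof is essentially a clean reduction to Theorem \ref{e:cognate}, so there is no serious obstacle. The only delicate point is the bookkeeping of diameters against floor values to confirm that a \emph{single} shift $m = \alpha(k+1)$ simultaneously witnesses that every pair in the domain is cognate to its translate; this rests on the elementary identity $\lfloor ((k+1)3^{s}-1)/3^{s}\rfloor = k$ used above, together with the hypothesis $k \le \ell$ built into the statement.
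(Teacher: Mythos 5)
Your proposal is correct and follows essentially the same route as the paper's own proof: verify the uniform bound $\lfloor(|i-j|-1)/3^{s}\rfloor\le\lfloor((k+1)3^{s}-1)/3^{s}\rfloor=k\le\ell$ over the block and then invoke Theorem \ref{e:cognate} with the single shift $m=\alpha(k+1)$ to conclude that the translation map preserves arcs and orientations. The only difference is cosmetic: you make the bijection $\phi$ and the evaluation of the floor explicit, which the paper leaves implicit.
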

\begin{proof} Let $i,j\in\{1,\ldots,(k+1)3^{s}+1\}\in V(G_n^\sigma)$ with $0\le
k\le\ell$. Since
\begin{align*}
\left\lfloor{|i-j|-1\over
3^s}\right\rfloor\le\left\lfloor{(k+1)3^s-1\over
3^s}\right\rfloor\le\ell,
\end{align*}
it follows from Theorem \ref{e:cognate} that
\begin{align}\label{e:con2}
\left\{i+\alpha(k+1)3^{s},
j+\alpha(k+1)3^{s}\right\}\in\textrm{cog}(i,j).
\end{align}
Thus $i$ is adjacent to $j$ with $i\rightarrow j$ in $G_n^\sigma$ if
and only if $i+\alpha(k+1)3^{s}$ is adjacent to $j+\alpha(k+1)3^{s}$
with $i+\alpha(k+1)3^{s}\rightarrow j+\alpha(k+1)3^{s}$ in
$G_n^\sigma$. Hence we obtain (i). Similarly we obtain (ii). Hence
the proof follows.\end{proof}

Note that real skew symmetric matrices with respect to different
labelings are permutationally similar.

\begin{example} {\rm Let us consider an oriented Pascal graph
$PG_n^\sigma=G_n^\sigma(1/(1-z),z/(1-z))$. Since by \eref{e:eq12}
its $A$-sequence is given by $(1,1,0,\ldots)$, i.e.\ $\ell=0$ so
that $k=0$, it follows from Theorem \ref{ORG-fractal} that
\begin{align}
\left<\{1,\ldots,3^{s}+1\}\right>\cong
\left<\{\alpha3^{s}+1,\ldots,(\alpha+1)3^{s}+1\}\right>
\end{align}
For instance, when $n=19$, $s=2$ and $\alpha=1$
$\left<\{1,\ldots,10\}\right>\cong \left<\{10,\ldots,19\}\right>$
i.e., if $\mathcal{S}(PG_{19}^\sigma)=(p_{n,k})_{1\le n\le19}$ then
$(p_{n,k})_{1\le n,k\le10}=(p_{n,k})_{10\le n,k\le19}$, see Figure
\ref{PG19}.}
\end{example}

\begin{figure}
\begin{center}
\epsfig{file=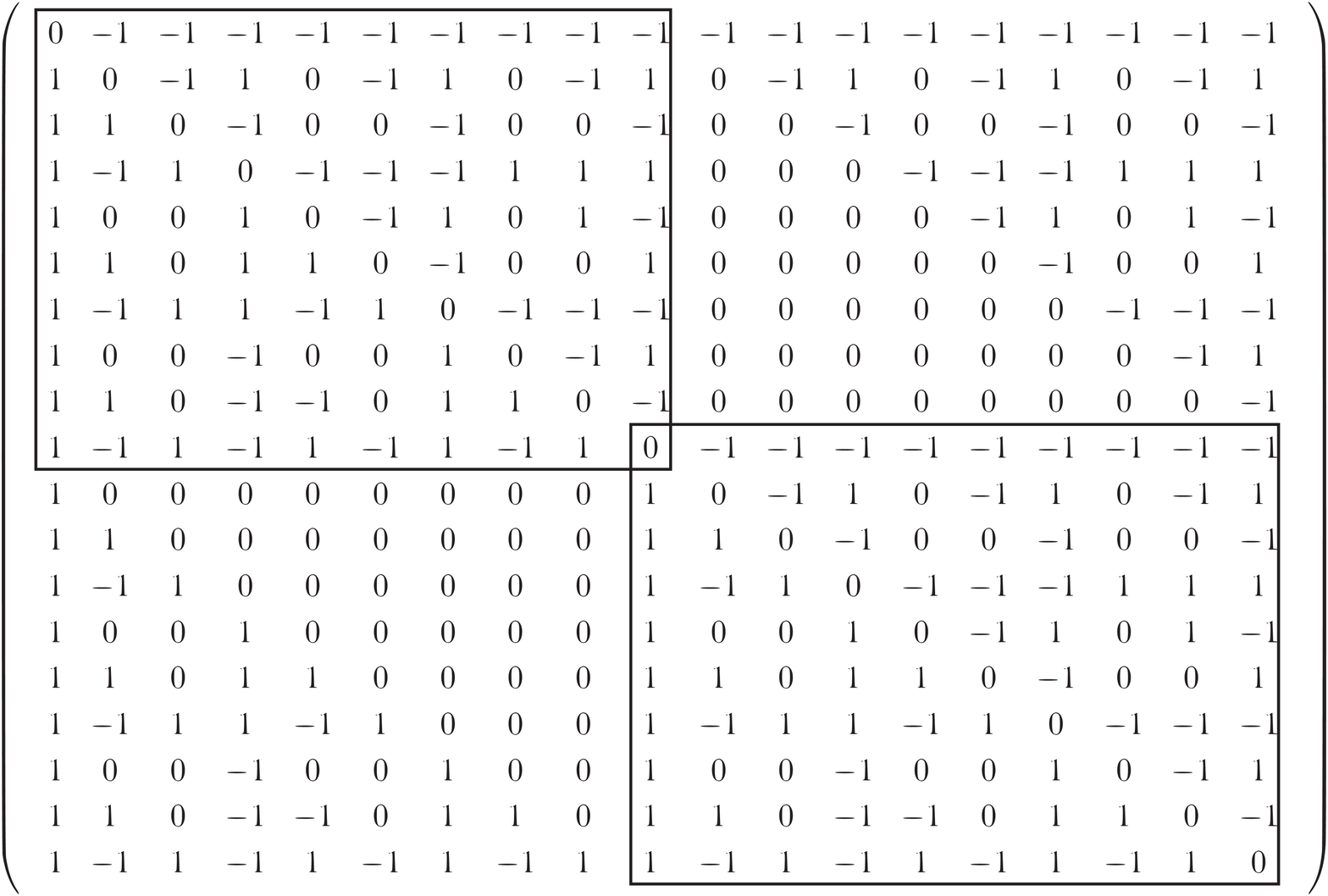,scale=0.22}
\end{center}
\caption{$\mathcal{S}(PG_{19}^\sigma)$}\label{PG19}
\end{figure}

\begin{lemma}\label{p-1-derivitive}
Let $h=\sum_{n\ge0}h_nz^n\in\mathbb{Z}[[z]]$. For a prime $p$, we
obtain
\begin{eqnarray*}
{d^{p-1}\over
dz^{p-1}}h(\sqrt[p]{z})\equiv-\sum_{k\ge0}h_{(k+1)p-1}z^{k}\;({\rm
mod}\;p).
\end{eqnarray*}
\end{lemma}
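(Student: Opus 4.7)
The plan is to differentiate $h(\sqrt[p]{z}) = \sum_{n\ge 0} h_n z^{n/p}$ term-by-term as a Puiseux series in $z$, and then observe that modulo $p$ almost every term vanishes by a short Wilson-type coefficient count. Termwise differentiation yields
\[
\frac{d^{p-1}}{dz^{p-1}}h(\sqrt[p]{z}) \;=\; \sum_{n\ge p-1} h_n \cdot n^{\underline{p-1}} \cdot z^{(n-p+1)/p},
\]
where $n^{\underline{p-1}} := n(n-1)(n-2)\cdots(n-p+2)$ is the usual falling factorial.

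The crucial step is the analysis of $n^{\underline{p-1}} \pmod p$. Since $n, n-1, \ldots, n-p+2$ are $p-1$ consecutive integers, their residues modulo $p$ realize exactly $p-1$ distinct classes, missing precisely the class of $n+1 \pmod p$. Hence, if $n \not\equiv -1 \pmod p$ then $0$ lies among these residues and $n^{\underline{p-1}} \equiv 0 \pmod p$, while if $n \equiv -1 \pmod p$ the product ranges over all nonzero residues and therefore $n^{\underline{p-1}} \equiv (p-1)! \equiv -1 \pmod p$ by Wilson's theorem.

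Consequently only the arithmetic progression $n = (k+1)p - 1$ ($k \ge 0$) contributes modulo $p$. For such an $n$, the exponent $(n-p+1)/p = k$ is a nonnegative integer, so every fractional-exponent term is annihilated and the Puiseux series collapses to an honest power series in $z$ modulo $p$. Summing over $k$ then produces
\[
\frac{d^{p-1}}{dz^{p-1}}h(\sqrt[p]{z}) \;\equiv\; -\sum_{k\ge 0} h_{(k+1)p-1}\, z^k \pmod{p},
\]
which is the desired identity. The only delicate point is ensuring that the fractional-exponent contributions vanish modulo $p$ so that the congruence is genuinely a congruence of power series in $z$; this is precisely what the residue count above guarantees. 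No deeper machinery is required beyond Wilson's theorem and careful bookkeeping of exponents.
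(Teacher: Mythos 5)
Your proof is correct and follows essentially the same route as the paper: expand the $(p-1)$st derivative termwise and use Wilson's theorem to pick out the coefficients that survive modulo $p$, noting that the surviving exponents $(n-p+1)/p$ are integers. The only real difference is that where the paper evaluates $(p-1)!\binom{m}{p-1}$ via Lucas's theorem, you compute $n^{\underline{p-1}} \bmod p$ by directly counting which residues occur among $p-1$ consecutive integers; this is slightly more elementary and, since you index by $n$ rather than the shifted exponent $m$, it also sidesteps the small index slip in the paper's intermediate formula $(p-1)!\binom{m}{p-1}h_{m+p-1}$, which should be $(p-1)!\binom{m+p-1}{p-1}h_{m+p-1}$.
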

\begin{proof}
By applying $(p-1)$th derivative of $h(z)$, we obtain
\begin{align*}
[z^m]{d^{p-1}\over
dz^{p-1}}h(z)&=m(m-1)\cdots(m-p+2)h_{m+p-1}=(p-1)!{m\choose
p-1}h_{m+p-1}.
\end{align*}
By the Wilson theorem and the Lucas theorem, the right had side of
the above equation can be written as
\begin{align*}
(p-1)!{m\choose p-1}h_{m-p+1}\equiv_p-{m\choose
p-1}h_{m+p-1}\equiv_p\left\{
\begin{array}{ll}
-h_{m+p-1}
 & \text{if $m=(k+1)p-1$} \\
0 & \text{otherwise}%
\end{array}
\right.
\end{align*}
where $k\ge0$ and $a\equiv_p b$ denotes $a\equiv b\;({\rm mod}\;p)$.
This implies
\begin{align*}
{d^{p-1}\over dz^{p-1}}h(z)\equiv_p-\sum_{k\ge0}h_{(k+1)p-1}z^{pk}.
\end{align*}
Hence the proof follows.
\end{proof}

We now consider the vertex set $V=[n]$ of an oriented Riordan graph
$G_n^\sigma$ of order $n\ge 3$. Then $V$ can be partitioned into
three subsets $V_1$, $V_2$ and $V_3$ where
$V_i=\{j\in[n]\;|\;i\equiv j\}$ for $j=1,2,3$. There exists a
permutation matrix $P$ such that
\begin{align*}
\mathcal{S}(G^\sigma)=P^{T}\left(
\begin{array}{ccc}
\mathcal{S}(\left<V_1\right>) & B_{1,2} & B_{1,3}  \\
-B_{1,2}^T & \mathcal{S}(\left<V_2\right>) & B_{2,3} \\
-B_{1,3}^T  & -B_{2,3}^T & \mathcal{S}(\left<V_3\right>)
\end{array}
\right)P
\end{align*}
where $V_i$ and $V_j$, $i\neq j$, are mutually nonempty disjoint
subsets of $V$ such that $V_1\cup V_2\cup V_3=V$.

\begin{theorem}[Oriented Riordan Graph Decomposition]\label{e:th}  Let $G_n^\sigma=G_n^\sigma(g,f)$ be an oriented Riordan graph of order $n\ge 3$ with the vertex set
$V=V_1\cup V_2\cup V_3$ where $V_i=\{j\in[n]\;|\;j\equiv i\}$. Then
its skew-adjacency matrix $\mathcal{S}(G_n^{\sigma})$ is
permutationally similar to the $3\times3$ block matrix:
\begin{eqnarray}\label{e:bm}
\left(
\begin{array}{cccc}
X_1 & B_{1,2} & B_{1,3}  \\
-B_{1,2}^T & X_2 & B_{2,3} \\
-B_{1,3}^T & -B_{2,3}^T & X_3
\end{array}
\right)
\end{eqnarray}
where $X_i=\mathcal{S}(\left<V_i\right>)$, the skew-adjacency matrix
of the induced subgraph of $G_n^\sigma$ by $V_i$, $i=1,2,3$.

In particular, $\left<V_i\right>$ is isomorphic to the oriented
Riordan graph of order $\ell_i=\lfloor (n-i)/3\rfloor+1$ given~by
\begin{align*}
G^\sigma_{\ell_i}\left(-{d^{2}\over dz^{2}}\left({gf^{i-1}\over
z^{i-1}}\right)(\sqrt[3]{z}),f(z)\right),
\end{align*}
and $B_{i,j}$ representing the edges between $V_i$ and $V_j$ can be
expressed as the sum of two Riordan matrices as follows:
\begin{align*}
B_{i,j}\equiv\left(-z{d^{2}\over dz^{2}}\left({gf^{j-1}\over
z^{i-1}}\right)(\sqrt[3]{z}),f(z)\right)_{\ell_i
\times\ell_j}+\left({d^{2}\over
dz^{2}}(z^{4-j}gf^{i-1})(\sqrt[3]{z}),f(z)\right)_{\ell_j\times\ell_i}^{T}.
\end{align*}
\end{theorem}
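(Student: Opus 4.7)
The plan is to establish the block form in \eref{e:bm} by a direct reindexing of vertices, and then identify each block on the right side as a leading principal submatrix of an explicit Riordan matrix. Let $P$ be the permutation matrix that lists the vertices of $G_n^\sigma$ in the order $V_1,V_2,V_3$. Since $P\,\mathcal S(G_n^\sigma)\,P^{T}$ only permutes rows and columns, it automatically takes the $3\times 3$ block form of \eref{e:bm} with $X_i=\mathcal S(\langle V_i\rangle)$ and $B_{i,j}$ the block indexed by $V_i\times V_j$. The real content is the closed form of $X_i$ and $B_{i,j}$; both are attacked by the same basic move: apply the defining identity $s_{u,v}\equiv[z^{u-2}]gf^{v-1}$ for $u>v$, collapse triple powers of $f$ via $f^{3}\equiv f(z^{3})\pmod 3$ (a consequence of Lemma \ref{p-power}), and recognize what remains as an entry of a Riordan matrix by inverting the coefficient-extraction formula of Lemma \ref{p-1-derivitive}.

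For the diagonal block $X_i$, parametrize the vertices of $V_i$ as $u=i+3a$, $v=i+3b$ with $a,b\in\{0,\ldots,\ell_i-1\}$. For $a>b$,
\begin{align*}
s_{i+3a,\,i+3b}\;\equiv\;[z^{3a+i-2}]\,(gf^{i-1})\cdot f^{3b}\;\equiv\;[z^{3a+i-2}]\,(gf^{i-1})\cdot f(z^3)^{b}\pmod 3.
\end{align*}
Expanding $f(z^3)^b$ in the coefficients of $f^b$ and reindexing turns the right side into $[z^{a-1}]\tilde g_i\,f^{b}$, where $\tilde g_i(z)=\sum_{m\ge 0}(gf^{i-1})_{3m+i+1}z^{m}$ collects every third coefficient of $gf^{i-1}$ starting at index $i+1$. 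Applying Lemma \ref{p-1-derivitive} to $h=gf^{i-1}/z^{i-1}$ (a genuine power series because $v_{z}(gf^{i-1})\ge i-1$) identifies
\begin{align*}
\tilde g_i\;\equiv\;-\frac{d^{\,2}}{dz^{2}}\left(\frac{gf^{i-1}}{z^{i-1}}\right)\!(\sqrt[3]{z})\pmod 3,
\end{align*}
which is exactly the $g$-function of the Riordan graph claimed for $\langle V_i\rangle$. Since $[z^{a-1}]\tilde g_i f^{b}$ is precisely the $(a,b)$-entry of the skew-adjacency matrix of $G^{\sigma}_{\ell_i}(\tilde g_i,f)$, the identification is complete.

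For the off-diagonal block $B_{i,j}$ with $i<j$, set $u=i+3a$, $v=j+3b$; because $i<j\le 3$ one checks that $u>v\iff a\ge b+1$ and $u<v\iff a\le b$. In the first case the same manipulation as above, applied to $s_{u,v}\equiv[z^{u-2}]gf^{v-1}$, gives $s_{u,v}\equiv[z^{a}]\,\tilde g^{(1)}_{i,j}\,f^{b}$ with $\tilde g^{(1)}_{i,j}\equiv -z\,\tfrac{d^{\,2}}{dz^{2}}(gf^{j-1}/z^{i-1})(\sqrt[3]{z})$; this is the $(a,b)$-entry of the first Riordan summand in the theorem. In the second case, starting from $s_{u,v}\equiv-[z^{v-2}]gf^{u-1}$, the same trick produces $s_{u,v}\equiv[z^{b}]\,\tilde g^{(2)}_{i,j}\,f^{a}$ with $\tilde g^{(2)}_{i,j}\equiv \tfrac{d^{\,2}}{dz^{2}}(z^{4-j}gf^{i-1})(\sqrt[3]{z})$, which is the $(a,b)$-entry of the transposed second summand. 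The two summands then combine correctly because each vanishes off its natural range: the first contributes $[z^{3a+i-2}]gf^{3b+j-1}$, which is zero when $a\le b$ since then $3a+i-2<3b+j-1\le v_{z}(gf^{3b+j-1})$; symmetrically the second vanishes when $a\ge b+1$. The sum therefore equals $s_{u,v}$ on the nose, proving the formula for $B_{i,j}$.

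The main obstacle is bookkeeping rather than any deep idea. The two key tools---the mod-$3$ Freshman dream of Lemma \ref{p-power} and the $(p-1)$-th-derivative coefficient extraction of Lemma \ref{p-1-derivitive}---do all the conceptual work. The technical care is in matching $1$-indexed vertex labels against $0$-indexed Riordan coefficients, in tracking the shifts by $i-1$ and $4-j$ that make the numerators and denominators inside the $\sqrt[3]{\,\cdot\,}$ operation pick out the correct coefficients of $gf^{i-1}$ or $gf^{j-1}$, and in verifying the strict inequalities that force each of the two off-diagonal summands to vanish on the opposite side of the diagonal.
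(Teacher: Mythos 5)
Your proposal is correct and follows essentially the same route as the paper: the same permutation listing $V_1,V_2,V_3$ to obtain the block form, Lemma \ref{p-power} (the mod-$3$ Freshman's dream) to collapse triple powers of $f$, and Lemma \ref{p-1-derivitive} to extract every third coefficient and produce the generating functions $-\frac{d^2}{dz^2}(gf^{i-1}/z^{i-1})(\sqrt[3]{z})$ and the two summands of $B_{i,j}$. The only cosmetic difference is that the paper certifies the Riordan structure of the subsampled blocks via the $A$-sequence recurrence $r_{i,j}\equiv\sum_k a_k r_{i-3,j-3+3k}$, whereas you read each entry off directly as $[z^{a-1}]\tilde g_i f^{b}$ from $f^{3b}\equiv f(z^3)^{b}$; the two verifications are equivalent, and your explicit check that each off-diagonal summand vanishes on the wrong side of the diagonal is a point the paper leaves implicit.
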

\begin{proof} Using the $n\times n$ permutation matrix defined as
$$
P=\left[e_{1}|e_{4}|\cdots|e_{3\lceil n/3\rceil
-2}|\cdots|e_{3}|e_{6} \;|\; \cdots \;|\; e_{3\lfloor n/3\rfloor
}\right] ^{T}
$$
where $e_{i}$ is the elementary column vector with the $i$th entry
being $1$ and the others entries being $0$, it can be shown that
$P\mathcal{S}(G_n^\sigma)P^{T}$ is equal to the block matrix in
\eref{e:bm}.

Taking into account the form of $P$, clearly $X_i$ is the
skew-adjacency matrix of the induced subgraph $\left<V_i\right>$ of
order $\ell_i=\lfloor (n-i)/3\rfloor+1$ in
$G_n^\sigma=G_n^\sigma(g,f)$. Let
$\mathcal{S}\left(G_n^\sigma\right)=[r_{i,j}]_{1\leq i,j\leq n}$.
Since $f=zA(f)$ where $A(z)\in \mathbb{Z}[[z]] $ is the generating
function of the $A$-sequence $(a_0,a_1,\ldots)$ for the Riordan
matrix $(g,f)$, it follows from Lemma \ref{p-power} that for
$i>j\geq p+1$
\begin{align}\label{e:eq}
r_{i,j} &\equiv\left[ z^{i-2}\right] gf^{j-1}=\left[ z^{i-2}\right]
gf^{j-4}\left(zA(f)\right)^3 \equiv \left[ z^{i-5}\right]
gf^{j-4}A\left( f^{3}\right)\nonumber \\
&=\left[ z^{i-5}\right] \left(
a_{0}gf^{j-4}+a_{1}gf^{j-1}+a_{2}gf^{j+2}+\cdots \right) \nonumber \\
&=\sum_{k=0}^{\lfloor(i-j-1)/3\rfloor}a_kr_{i-3,j-3+3k}.
\end{align}
Since $X_i=[r_{3(u-1)+i,3(v-1)+i}]_{1\le u,v\le\ell_i}$ and by Lemma
\ref{p-1-derivitive} $${d^{2}\over dz^{2}}\left({gf^{i-1}\over
z^{i-1}}\right)={d^{2}\over
dz^{2}}\left(\sum_{k\ge0}r_{k+i+1,i}z^k\right)\equiv-\sum_{k\ge0}r_{3(k+1)+i,i}z^{3k},$$
it follows from \eref{e:eq} that
\begin{align*}
\left<V_i\right>\cong G^\sigma_{\ell_i}\left(-{d^{2}\over
dz^{2}}\left({gf^{i-1}\over
z^{i-1}}\right)(\sqrt[3]{z}),f(z)\right).
\end{align*}

Let
\begin{align*}
L_{i,j}=\left[
\begin{array}{cccc}
0 & 0 & 0 & \cdots \\
r_{3+i,j} & 0 & 0 & \cdots  \\
r_{6+i,j} & r_{6+i,3+j} & 0 & \cdots  \\
r_{9+i,j} & r_{9+i,3+j} & r_{9+i,6+j} & \ddots  \\
\vdots  & \vdots  & \vdots   & \ddots
\end{array}
\right]\;{\rm and}\; U_{i,j}=\left[
\begin{array}{ccccc}
r_{j,i} & 0 & 0 & 0 &\cdots \\
r_{3+j,i} & r_{3+j,3+i} & 0 & 0 & \cdots \\
r_{6+j,i} & r_{6+j,3+i} & r_{6+j,6+i} & 0 & \cdots\\
r_{9+j,i} & r_{9+j,3+i} & r_{9+j,6+i} & r_{9+j,9+i} & \ddots\\
\vdots  & \vdots  & \vdots  & \vdots  & \ddots
\end{array}
\right].
\end{align*}
Since by Lemma \ref{p-1-derivitive} we obtain
\begin{align*}
{d^{2}\over dz^{2}}\left({gf^{j-1}\over z^{i-1}}\right)={d^{2}\over
dz^{2}}\left(\sum_{k\ge0}r_{k+i+1,j}z^k\right)\equiv-\sum_{k\ge0}r_{3(k+1)+i,j}z^{3k}
\end{align*}
and
\begin{align*}
{d^{2}\over dz^{2}}(z^{4-j}gf^{i-1})={d^{2}\over
dz^{2}}\left(\sum_{k\ge0}r_{k+j-2,i}z^k\right)\equiv-\sum_{k\ge0}r_{3k+j,i}z^{3k},
\end{align*}
we obtain
\begin{align*}
L_{i,j}=\left(-z{d^{2}\over dz^{2}}\left({gf^{j-1}\over
z^{i-1}}\right)(\sqrt[3]{z}),f(z)\right)\;{\rm
and}\;U_{i,j}=\left(-{d^{2}\over
dz^{2}}(z^{4-j}gf^{i-1})(\sqrt[3]{z}),f(z)\right).
\end{align*}
One can see that $B_{i,j}=(L_{i,j}-U_{i,j}^T)_{\ell_i\times\ell_j}$
where $(L_{i,j}-U_{i,j}^T)_{\ell_i\times\ell_j}$ is the
$\ell_i\times\ell_j$ leading principal matrix of $L_{i,j}-U_{i,j}$.
Hence the proof follows.
\end{proof}

\begin{example} {\rm Let us consider an oriented Pascal graph
$PG_{13}^\sigma=G_{13}^\sigma(1/(1-z),z/(1-z))$. Then we have
{\small\begin{align}\label{PG13}
\mathcal{S}(PG_{13}^\sigma)=\left(\begin{array}{ccccccccccccc}
0 & -1 & -1 & -1 & -1 & -1 & -1 & -1 & -1 & -1 & -1 & -1 & -1 \\
1 & 0 & -1 & 1 & 0 & -1 & 1 & 0 & -1 & 1 & 0 & -1 & 1 \\
1 & 1 & 0 & -1 & 0 & 0 & -1 & 0 & 0 & -1 & 0 & 0 & -1 \\
1 & -1 & 1 & 0 & -1 & -1 & -1 & 1 & 1 & 1 & 0 & 0 & 0 \\
1 & 0 & 0 & 1 & 0 & -1 & 1 & 0 & 1 & -1 & 0 & 0 & 0 \\
1 & 1 & 0 & 1 & 1 & 0 & -1 & 0 & 0 & 1 & 0 & 0 & 0 \\
1 & -1 & 1 & 1 & -1 & 1 & 0 & -1 & -1 & -1 & 0 & 0 & 0 \\
1 & 0 & 0 & -1 & 0 & 0 & 1 & 0 & -1 & 1 & 0 & 0 & 0 \\
1 & 1 & 0 & -1 & -1 & 0 & 1 & 1 & 0 & -1 & 0 & 0 & 0 \\
1 & -1 & 1 & -1 & 1 & -1 & 1 & -1 & 1 & 0 & -1 & -1 & -1 \\
1 & 0 & 0 & 0 & 0 & 0 & 0 & 0 & 0 & 1 & 0 & -1 & 1 \\
1 & 1 & 0 & 0 & 0 & 0 & 0 & 0 & 0 & 1 & 1 & 0 & -1 \\
1 & -1 & 1 & 0 & 0 & 0 & 0 & 0 & 0 & 1 & -1 & 1 & 0
\end{array}\right).
\end{align}}For a permutation matrix
$P=\left[e_{1}|e_{4}|e_{7}|e_{10}|e_{13}|e_{2}|e_{5}|e_{8}|e_{11}|e_{3}|e_{6}|e_{9}|e_{12}\right]
^{T}$, we obtain {\small\begin{align}\label{PPG13PT}
P\mathcal{S}(PG_{13}^\sigma)P^T=\left(\begin{array}{ccccc|cccc|cccc}
0 & -1 & -1 & -1 & -1 & -1 & -1 & -1 & -1 & -1 & -1 & -1 & -1 \\
1 & 0 & -1 & 1 & 0 & -1 & -1 & 1 & 0 & 1 & -1 & 1 & 0 \\
1 & 1 & 0 & -1 & 0 & -1 & -1 & -1 & 0 & 1 & 1 & -1 & 0 \\
1 & -1 & 1 & 0 & -1 & -1 & 1 & -1 & -1 & 1 & -1 & 1 & -1 \\
1 & 0 & 0 & 1 & 0 & -1 & 0 & 0 & -1 & 1 & 0 & 0 & 1 \\\hline
1 & 1 & 1 & 1 & 1 & 0 & 0 & 0 & 0 & -1 & -1 & -1 & -1 \\
1 & 1 & 1 & -1 & 0 & 0 & 0 & 0 & 0 & 0 & -1 & 1 & 0 \\
1 & -1 & 1 & 1 & 0 & 0 & 0 & 0 & 0 & 0 & 0 & -1 & 0 \\
1 & 0 & 0 & 1 & 1 & 0 & 0 & 0 & 0 & 0 & 0 & 0 & -1 \\\hline
1 & -1 & -1 & -1 & -1 & 1 & 0 & 0 & 0 & 0 & 0 & 0 & 0 \\
1 & 1 & -1 & 1 & 0 & 1 & 1 & 0 & 0 & 0 & 0 & 0 & 0 \\
1 & -1 & 1 & -1 & 0 & 1 & -1 & 1 & 0 & 0 & 0 & 0 & 0 \\
1 & 0 & 0 & 1 & -1 & 1 & 0 & 0 & 1 & 0 & 0 & 0 & 0
\end{array}\right).
\end{align}}Since ${1\over (1-z)^3}\equiv {1\over 1-z^3}$ and
\begin{align*}
{d^{2}\over dz^{2}}\left({gf^{i-1}\over z^{i-1}}\right)={d^{2}\over
dz^{2}}\left({1\over (1-z)^i}\right)={i(i+1)\over (1-z)^{i+2}},
\end{align*}
it follows from Theorem \ref{e:th} that we may check
\begin{align*}
\left<V_1\right>\cong G_{5}^\sigma\left({1\over 1-z},{z\over
1-z}\right)= PG_5^\sigma\;{\rm
and}\;\left<V_2\right>\cong\left<V_3\right>\cong
G_{4}^\sigma\left(0,{z\over 1-z}\right)\cong N_4
\end{align*}
where $V_1=\{1,4,7,10,13\}$, $V_2=\{2,5,8,11\}$ and
$V_3=\{3,6,9,12\}$. Since
\begin{align*}
{d^{2}\over dz^{2}}\left({gf^{2-1}\over z^{1-1}}\right)&=
{d^{2}\over dz^{2}}\left({z\over
(1-z)^2}\right)={2(2+z)\over (1-z)^4}\equiv {1\over (1-z)^3}\equiv {1\over 1-z^3},\\
{d^{2}\over dz^{2}}(z^{4-2}gf^{1-1})&={d^{2}\over dz^{2}}{z^2\over
1-z}={2\over (1-z)^3}\equiv-{1\over 1-z^3},
\end{align*}
it follows from Theorem \ref{e:th} that we may check also
\begin{align*}
B_{1,2}&\equiv\left(-{z\over 1-z},{z\over
1-z}\right)_{5,4}+\left(-{1\over 1-z},{z\over 1-z}\right)_{4,5}^T\\
&=\left(\begin{array}{cccc}
0 & 0 & 0 & 0 \\
-1 & 0 & 0 & 0 \\
-1 & -1 & 0 & 0 \\
-1 & -2 & -1 & 0 \\
-1 & -3 & -3 & -1
\end{array}\right)
+ \left(\begin{array}{ccccc}
-1 & 0 & 0 & 0 & 0 \\
-1 & -1 & 0 & 0 & 0 \\
-1 & -2 & -1 & 0 & 0 \\
-1 & -3 & -3 & -1 & 0%
\end{array}\right)^T\\
&\equiv\left(
\begin{array}{cccc}
-1 & -1 & -1 & -1 \\
-1 & -1 & 1 & 0 \\
-1 & -1 & -1 & 0 \\
-1 & 1 & -1 & -1 \\
-1 & 0 & 0 & -1
\end{array}
\allowbreak \right).
\end{align*}
Similarly, we may check that Theorem \ref{e:th} holds for $B_{1,3}$
and $B_{2,3}$.}
\end{example}

\begin{theorem}\label{e:th13}
Let $G_n^\sigma=G_n^\sigma(g,f)$ be an oriented Riordan graph of
order $n\ge 3$ and $V_i=\{j\in[n]\;|\;j\equiv i\}$, $i=1,2,3$. Then
\begin{itemize}
\item[{\rm(i)}] For $n=3k$ ($k\ge1$), $\left\langle V_i\right\rangle \cong \left\langle
V_j\right\rangle$ if and only if $[z^{3m+i-2}]gf^{i-1}\equiv
[z^{3m+j-2}]gf^{j-1}$ for all $m\ge1$.
\item[{\rm(ii)}] The induced subgraph $\left\langle V_{i}\right\rangle$ is
a null graph if and only if $[z^{3m+i-2}]gf^{i-1}\equiv 0$ for all
$m\ge1$.
\item[{\rm(iii)}] $G_{n}^\sigma$ is a 3-partite graph with parts
$V_1,V_2,V_{3}$ if and only if $[z^{3m+i-2}]gf^{i-1}\equiv0$ for all
$m\ge1$ and $i=1,2,3$.
\item[{\rm(iv)}] For $j>i$, there are no arcs between a vertex $u\in V_i$ and a vertex $v\in V_j$ if and only if
$[z^{3m+i-2}]gf^{j-1}\equiv[z^{3(m-1)+j-2}]gf^{i-1}\equiv 0$ for all
$m\ge1$.
\end{itemize}
\end{theorem}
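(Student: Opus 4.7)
The plan is to apply the Oriented Riordan Graph Decomposition (Theorem \ref{e:th}) to express each $\langle V_i\rangle$ and each block $B_{i,j}$ as a Riordan array, and then use Lemma \ref{p-1-derivitive} with $p=3$ to convert each claim into a coefficient identity for $gf^{i-1}$ modulo 3. The single preliminary computation that will drive everything is: setting $g_i':=-\dfrac{d^2}{dz^2}(gf^{i-1}/z^{i-1})(\sqrt[3]{z})$, applying Lemma \ref{p-1-derivitive} to $h=gf^{i-1}/z^{i-1}$ yields
\[
[z^{m-1}]\,g_i'\equiv [z^{3m+i-2}]gf^{i-1}\qquad(m\ge 1),
\]
so the generator of $\langle V_i\rangle\cong G^\sigma_{\ell_i}(g_i',f)$ is read off directly from the coefficients appearing in the theorem statement.

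With this identity in hand, parts (ii) and (iii) will follow immediately. For (ii), I would argue that $\langle V_i\rangle$ is a null graph iff $g_i'\equiv 0$ on the first $\ell_i-1$ coefficients (since the skew-adjacency matrix of $G^\sigma_{\ell_i}(g_i',f)$ is assembled from $g_i' f^{q-1}$ for $1\le q\le\ell_i-1$), which by the preliminary identity is exactly the stated condition $[z^{3m+i-2}]gf^{i-1}\equiv 0$ for $m$ in the relevant range. For (iii), I would then observe that $G_n^\sigma$ is 3-partite with parts $V_1,V_2,V_3$ iff each $\langle V_i\rangle$ has no internal edges, and apply (ii) to each $i\in\{1,2,3\}$.

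For (i), I would first check that $\ell_1=\ell_2=\ell_3=k$ precisely when $n=3k$, so $\langle V_i\rangle$ and $\langle V_j\rangle$ are oriented Riordan graphs of the same order $k$ sharing the same $f$. Under the canonical Riordan labeling they coincide iff $g_i'\equiv g_j'$ on the coefficients $z^0,\ldots,z^{k-2}$, which via the preliminary identity is the stated equivalence $[z^{3m+i-2}]gf^{i-1}\equiv[z^{3m+j-2}]gf^{j-1}$ for $1\le m\le k-1$.

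Part (iv) will be the most intricate step. The plan is to unpack the decomposition $B_{i,j}\equiv L_{i,j}-U_{i,j}^T$ from Theorem \ref{e:th}: the generator $-z\,\dfrac{d^2}{dz^2}(gf^{j-1}/z^{i-1})(\sqrt[3]{z})$ of $L_{i,j}$ has no constant term so $L_{i,j}$ is strictly lower triangular, while $U_{i,j}^T$ is upper triangular including the diagonal, so the two summands occupy disjoint entries and $B_{i,j}=0$ iff $L_{i,j}=0$ and $U_{i,j}=0$ separately. Applying Lemma \ref{p-1-derivitive} to each of these generators then produces $[z^{3m+i-2}]gf^{j-1}\equiv 0$ and $[z^{3(m-1)+j-2}]gf^{i-1}\equiv 0$, respectively. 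The main obstacle throughout will be keeping the index shifts straight: the extra factor $z$ in $L_{i,j}$'s generator versus the shift $z^{4-j}$ in $U_{i,j}$'s generator is precisely what creates the two different offsets $3m+i-2$ and $3(m-1)+j-2$, and verifying this alignment through the $z\mapsto\sqrt[3]{z}$ substitution of Lemma \ref{p-1-derivitive} is where most of the care will be needed.
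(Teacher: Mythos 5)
Your proposal is correct and follows essentially the same route as the paper: both reduce each part to the vanishing or equality of the generating functions $\frac{d^{2}}{dz^{2}}\bigl(gf^{i-1}/z^{i-1}\bigr)$ and the two generators of $B_{i,j}$ supplied by Theorem \ref{e:th}, and then extract the coefficient conditions via Lemma \ref{p-1-derivitive} with $p=3$. Your extra observations (the equality $\ell_1=\ell_2=\ell_3=k$ when $n=3k$, and the disjoint-support argument showing $B_{i,j}=0$ forces both summands to vanish) are correct refinements of steps the paper leaves implicit.
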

\begin{proof} (i) Let $n=3k$ with $k\ge1$. From Theorem~\ref{e:th},
$\left\langle V_i\right\rangle \cong \left\langle V_j\right\rangle$
if and only if the matrices $X_i$ and $X_j$ in the block matrix in
\eref{e:bm} are given by
\begin{align*}
{d^{2}\over dz^{2}}\left({gf^{i-1}\over z^{i-1}}\right)\equiv
{d^{2}\over dz^{2}}\left({gf^{j-1}\over
z^{j-1}}\right)&\;\Leftrightarrow\;[z^{3m-1}]{gf^{i-1}\over
z^{i-1}}\equiv[z^{3m-1}]{gf^{j-1}\over z^{j-1}}\\
&\;\Leftrightarrow\;[z^{3m+i-2}]{gf^{i-1}}\equiv[z^{3m+j-2}]{gf^{j-1}}
\end{align*}
for all $m\ge1$ which proves (i).

(ii) From Theorem \ref{e:th}, the induced subgraph $\left\langle
V_i\right\rangle$ is a null graph if and only if the matrix $X_i$ in
\eref{e:bm} is a zero matrix, i.e.\
\begin{align}\label{ii-proof}
{d^{2}\over dz^{2}}\left({gf^{i-1}\over z^{i-1}}\right)\equiv
0\;\Leftrightarrow\;[z^{3m-1}]{gf^{i-1}\over
z^{i-1}}\equiv0\;\Leftrightarrow\;[z^{3m+i-2}]{gf^{i-1}}\equiv0
\end{align}
for all $m\ge1$ which proves (ii).

(iii) From Theorem \ref{e:th}, $G_{n}^\sigma$ is a 3-partite graph
with parts $V_1,V_2,V_{3}$ if and only if the matrices $X_1$, $X_2$
and $X_3$ in \eref{e:bm} are zero matrices.  Hence by
\eref{ii-proof} we obtain desired result.

(iv) Form Theorem \ref{e:th}, there are no arcs between a vertex
$u\in V_i$ and a vertex $v\in V_j$ with $j>i$ if and only if the
matrix $B_{i,j}$ in \eref{e:bm} is a zero matrices, i.e.\
\begin{align*}
{d^{2}\over dz^{2}}\left({gf^{j-1}\over
z^{i-1}}\right)\equiv0\;\;{\rm and}\;\;{d^{2}\over
dz^{2}}(z^{4-j}gf^{i-1})\equiv0\;&\Leftrightarrow\;
[z^{3m-1}]{gf^{j-1}\over
z^{i-1}}\equiv0\;\textrm{and}\;[z^{3m-1}]z^{4-j}gf^{i-1}\equiv0\\
&\Leftrightarrow[z^{3m+i-2}]gf^{j-1}\equiv[z^{3(m-1)+j-2}]gf^{i-1}\equiv
0
\end{align*}
for all $m\ge1$. Hence the proof follows.
\end{proof}
\begin{example}
{\rm Let us consider an oriented Pascal graph
$PG_n^\sigma=G_n^\sigma(1/(1-z),z/(1-z))$. Since for any $m\ge1$
\begin{align*}
[z^{3m+2-2}]{gf^{2-1}}&=[z^{3m}]{z\over
(1-z)^2}=[z^{3m}]\sum_{k\ge1}kz^k=3m\equiv0;\\
[z^{3m+3-2}]{gf^{3-1}}&=[z^{3m}]{z^2\over
(1-z)^3}=[z^{3m}]\sum_{k\ge2}{k\choose
2}z^k={3m(3m-1)\over2}\equiv0,
\end{align*}
it follows from (ii) of Theorem \ref{e:th13} that induced subgraphs
$\left<V_2\right>$ and $\left<V_3\right>$ are null. For instance,
when $n=13$ see \eref{PPG13PT}}.
\end{example}

\section{Oriented Riordan graph of the Bell type}

\begin{definition}\label{faithful-def} {\rm Let $G_n^\sigma=G_n^\sigma(g,f)$ be a proper oriented Riordan graph with the vertex sets $V_i=\{j\in[n]\;|\;j\equiv i\}$, $i=1,2,3$. If $\left<
V_{1}\right> \cong G_{\lceil n/3\rceil}^\sigma$ and $\left<
V_{2}\right>$ and $\left< V_{3}\right>$ are null graphs then
$G_n^\sigma$ is {\em i1-decomposable}.}
\end{definition}

An oriented Riordan graph $G_n^\sigma(g,f)$ is called of {\it Bell
type} if $f=zg$.

\begin{lemma}\label{lem1}
Let $G_n^\sigma(g,zg)$ be a proper. Then the induced subgraph
$\left<V_3\right>$ is a null graph.
\end{lemma}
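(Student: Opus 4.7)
The plan is to reduce the claim directly to Theorem \ref{e:th13}(ii) applied with $i=3$, and then use the Bell-type hypothesis $f=zg$ together with the Frobenius-type identity of Lemma \ref{p-power} to finish by a one-line coefficient extraction.

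First I would invoke Theorem \ref{e:th13}(ii): $\left<V_3\right>$ is a null graph if and only if $[z^{3m+1}]gf^{2}\equiv 0$ for every $m\ge 1$. So the whole task is to verify this congruence for a Bell-type Riordan array $(g,zg)$.

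Next I would compute $gf^{2}$ under the Bell hypothesis. Since $f=zg$, we have
\begin{align*}
gf^{2}=g\cdot(zg)^{2}=z^{2}g^{3},
\end{align*}
so that
\begin{align*}
[z^{3m+1}]gf^{2}=[z^{3m+1}]\bigl(z^{2}g^{3}\bigr)=[z^{3m-1}]g^{3}.
\end{align*}
Then I would apply Lemma \ref{p-power} with $p=3$, $k=1$ (taking $f=z$), which gives $g^{3}\equiv g(z^{3})$. Because $g(z^{3})=\sum_{k\ge 0}g_{k}z^{3k}$ only has nonzero coefficients at indices divisible by $3$, and $3m-1\not\equiv 0\pmod 3$, we conclude $[z^{3m-1}]g^{3}\equiv 0$ for all $m\ge 1$. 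Combined with Theorem \ref{e:th13}(ii), this yields that $\left<V_{3}\right>$ is null.

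There is no real obstacle here; the only subtlety is recognizing that the Bell-type condition $f=zg$ collapses $gf^{2}$ into a pure cube $z^{2}g^{3}$, at which point the Frobenius identity $g^{3}\equiv g(z^{3})\pmod 3$ packaged in Lemma \ref{p-power} kills every coefficient of $g^{3}$ at an index $\not\equiv 0\pmod 3$. Properness of $G_{n}^{\sigma}(g,zg)$ (i.e.\ $g(0)\ne 0$ and $(zg)'(0)=g(0)\ne 0$) is only needed so that $(g,zg)$ is a bona fide element of the Riordan group; it plays no role in the divisibility argument itself.
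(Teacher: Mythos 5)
Your proof is correct and follows essentially the same route as the paper: both reduce to Theorem \ref{e:th13}(ii) with $i=3$, rewrite $gf^{2}=z^{2}g^{3}$ using $f=zg$, and kill the coefficient $[z^{3m-1}]g^{3}\equiv[z^{3m-1}]g(z^{3})\equiv 0$ via the Frobenius identity of Lemma \ref{p-power}. No differences worth noting.
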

\begin{proof}
Since $[z^{3m+1}]g(z)(zg(z))^{p-1}=[z^{3m-1}]g^p(z)\equiv
[z^{3m-1}]g(z^3)\equiv0$, it follows from (ii) of Theorem
\ref{e:th13} that the induced subgraph $\left\langle
V_3\right\rangle$ is a null graph. Hence the proof
follows.\end{proof}

\begin{theorem}\label{i1-decomposable-Theorem}
An oriented Riordan graph $G_n^{\sigma}(g,zg)$ is i1-decomposable if
and only if
\begin{align}\label{eq5}
g'\equiv\pm g^2.
\end{align}
\end{theorem}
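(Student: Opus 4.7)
Since $G_n^\sigma(g,zg)$ is proper and Bell-type, Lemma \ref{lem1} already gives $\langle V_3\rangle$ null, so i1-decomposability reduces to $\langle V_1\rangle\cong G_{\lceil n/3\rceil}^\sigma(g,zg)$ together with $\langle V_2\rangle$ null. By Theorem \ref{e:th} with $i=1$ and Lemma \ref{p-1-derivitive}, the first is equivalent to $g_{3k+2}\equiv g_k$ for every $k\ge 0$, or what is the same, to the ternary self-similarity
\begin{equation*}
g(z)\equiv A(z^3)+zB(z^3)+z^2 g(z^3)\pmod 3,\qquad A(w):=\sum_{k\ge 0}g_{3k}w^k,\ \ B(w):=\sum_{k\ge 0}g_{3k+1}w^k.
\end{equation*}
By Theorem \ref{e:th13}(ii) the second is $[z^{3k+2}]g^2\equiv 0$ for every $k\ge 0$; a direct expansion of $g^2$ via the above self-similarity shows this is equivalent to the Riordan identity $B(w)^2\equiv A(w)g(w)$ in $\mathbb{Z}_3[[w]]$. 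Call these translated conditions \textup{(A)} and \textup{(B)}. The task is to prove that \textup{(A)} and \textup{(B)} together hold if and only if $g'\equiv\pm g^2$.

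\textbf{Forward direction.} Assume $g'\equiv\epsilon g^2$ with $\epsilon\in\{\pm 1\}$. Then \textup{(B)} is immediate since $[z^{3k+2}]g'=(3k+3)g_{3k+3}\equiv 0$. For \textup{(A)}, multiply by $g$ to get $g^3=g\cdot g^2\equiv\epsilon gg'$; combining $(g^2)'=2gg'\equiv -gg'$ with $(g^2)'\equiv\epsilon g''$ yields $gg'\equiv -\epsilon g''$, hence $g^3\equiv -g''$. Since Lemma \ref{p-power} supplies the Frobenius identity $g^3\equiv g(z^3)$, we obtain $g''\equiv -g(z^3)$; matching class-$0$-mod-$3$ coefficients then gives $g_{3k+2}\equiv g_k$, which is \textup{(A)}.

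\textbf{Backward direction.} Now assume \textup{(A)} and \textup{(B)}. Squaring the self-similarity and killing the class-$2$-mod-$3$ contribution via \textup{(B)} leaves
\begin{equation*}
g^2\equiv \alpha(z^3)+z\,\beta(z^3)\pmod 3,\qquad \alpha:=A^2-wBg,\ \ \beta:=-AB+wg^2.
\end{equation*}
Multiply by $g$ once more and expand $(A+zB+z^2 g)(\alpha+z\beta)$ (inner arguments at $z^3$) along the three residue classes of $z$ modulo $3$. Lemma \ref{p-power} gives $g^3\equiv g(z^3)$, which is pure class $0$, so the class-$1$, class-$2$, and class-$0$ parts of the product read
\begin{equation*}
A\beta+B\alpha\equiv 0,\qquad B\beta+g\alpha\equiv 0,\qquad A\alpha+wg\beta\equiv g.
\end{equation*}
Because the graph is proper, $g_0\neq 0$ and $g$ is a unit in $\mathbb{Z}_3[[w]]$. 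Using the second identity to solve $\alpha\equiv -B\beta/g$ and substituting into the third, the $A$-terms collapse through the very definition of $\beta$, leaving the single quadratic identity $\beta^2\equiv g^2\pmod 3$ (the first identity then becoming automatic from the second together with \textup{(B)}). Since $\mathbb{Z}_3[[w]]$ is an integral domain, $\beta\equiv\pm g$, and then the second identity forces $\alpha\equiv\mp B$. Comparing with $g'\equiv B(z^3)-zg(z^3)\pmod 3$, obtained by differentiating the self-similarity, these equalities are exactly $g^2\equiv\pm g'$.

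\textbf{Expected obstacle.} The crux is the backward direction: one must organise the bookkeeping of the three mod-$3$ residue classes in $(A+zB+z^2 g)(\alpha+z\beta)$ cleanly enough that the two Frobenius-type identities $g^3\equiv g(z^3)$ and $B^2\equiv Ag$ interact to collapse to the single quadratic identity $\beta^2\equiv g^2$. The forward direction and the initial translation of i1-decomposability into \textup{(A)} and \textup{(B)} are routine applications of Theorems \ref{e:th}, \ref{e:th13} and Lemmas \ref{p-power}, \ref{p-1-derivitive}.
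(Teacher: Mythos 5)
Your proposal is correct, and its translation of i1-decomposability into the two coefficient conditions (via Lemma \ref{lem1}, Theorem \ref{e:th} and Lemma \ref{p-1-derivitive}) together with your forward direction match the paper's argument in substance. Where you genuinely diverge is the backward direction. The paper stays entirely at the level of derivatives: condition (A) is recorded as $g''\equiv -g(z^3)\equiv -g^3$ and condition (B) as $(g^2)''\equiv 0$; expanding $(g^2)''=2(g')^2+2gg''\equiv -(g')^2-gg''$ turns (B) into $(g')^2\equiv -gg''$, and substituting (A) gives $(g')^2\equiv g^4$, whence $g'\equiv\pm g^2$ by factoring $(g'-g^2)(g'+g^2)\equiv 0$ in the integral domain $\mathbb{Z}_3[[z]]$ --- about two lines. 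Your route instead splits $g$ and $g^2$ into mod-$3$ residue classes, multiplies out $g^3\equiv g(z^3)$ class by class, and extracts $\beta^2\equiv g^2$; I checked the bookkeeping ($\alpha\equiv -B\beta/g$ substituted into $A\alpha+wg\beta\equiv g$ does collapse to $\beta^2\equiv g^2$ precisely because $\beta=-AB+wg^2$, and the remaining identity is indeed implied by $B^2\equiv Ag$), so the argument closes, but it is considerably heavier and relies on the extra hypothesis $g_0\not\equiv 0$ to invert $g$, which the paper's version does not need at that point. What your version buys is an explicit description of the classes of $g^2$ ($\alpha\equiv\mp B$, $\beta\equiv\pm g$), which makes the final comparison with $g'\equiv B(z^3)-zg(z^3)$ transparent; what the paper's version buys is brevity, by letting the identity $(g^2)''\equiv -(g')^2-gg''$ do all the residue-class bookkeeping implicitly.
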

\begin{proof} Let $V_i=\{j\in[n]\;|\;j\equiv i\}$ be the vertex
subsets of $G_n^{\sigma}(g,zg)$ for $i=1,2,3$. From Lemma
\ref{lem1}, $\left<V_3\right>$ is the null graph. By definition,
$G_n^{\sigma}(g,zg)$ is i1-decomposable if and only if $\left<
V_{1}\right> \cong G_{\lceil n/3\rceil}^\sigma(g,zg)$ and $\left<
V_{2}\right>$ is the null graph. By Oriented Riordan Graph
Decomposition, $\left< V_{1}\right> \cong
G_{\lceil(n-1)/3\rceil}^\sigma(g,zg)$ and $\left< V_{2}\right>$ is
the null graph if and only if
\begin{align*}
&-g''(\sqrt[3]{z})\equiv g(z)\;\;{\rm
and}\;\;-(g^2)''(\sqrt[3]{z})\equiv 0\\
\Leftrightarrow\;\;&g''(z)\equiv -g(z^3)\equiv  -g^3(z)\;\;{\rm
and}\;\;(g')^2\equiv -g''g\equiv g^4\\
\Leftrightarrow\;\;&g'\equiv \pm g^2.
\end{align*}
Hence the proof follows.
\end{proof}

\begin{theorem}\label{i1-decomposable-A-seq}
An oriented Riordan graph $G_n^{\sigma}(g,zg)$ is i1-decomposable if
and only if the ternary $A$-sequence $A=(a_k)_{k\ge0}$ of
$G_n^{\sigma}(g,zg)$ satisfies either
\begin{align}\label{A-seq-i1}
(1,1,0,a_3,a_3,0,a_6,a_6,0,\ldots),\quad a_i\in\{0,1,-1\}.
\end{align}
or
\begin{align}\label{A-seq-i1-1}
(1,-1,0,a_3,-a_3,0,a_6,-a_6,0,\ldots),\quad a_i\in\{0,1,-1\}.
\end{align}
\end{theorem}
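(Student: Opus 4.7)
The plan is to invoke Theorem \ref{i1-decomposable-Theorem}, which reduces the claim to showing, for each fixed sign $\epsilon\in\{+1,-1\}$, that $g'\equiv \epsilon g^2$ holds if and only if $A(z)$ has the ternary $A$-sequence described by \eqref{A-seq-i1} (when $\epsilon=1$) or \eqref{A-seq-i1-1} (when $\epsilon=-1$). It will help to observe first that both target sequences can be written uniformly as $A(z)\equiv (1+\epsilon z)\,h(z^3)$ for some $h\in\mathbb{Z}_3[[z]]$ with $h(0)=1$: the factor $h(z^3)$ accounts for the free coefficients $a_{3m}$, multiplication by $1+\epsilon z$ forces $a_{3m+1}\equiv \epsilon a_{3m}$, and no monomials of degree $\equiv 2\pmod 3$ appear.

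Next I would derive an ordinary differential equation for $A$ from the Bell-type relations. Differentiating $f=zg$ gives $f'=g+zg'$, while differentiating $f=zA(f)$ and using $g=A(f)$ gives $f'=g+zf'A'(f)$. Comparing yields the unconditional chain-rule identity $g'=f'A'(f)=(g+zg')A'(f)$. Substituting the hypothesis $g'\equiv \epsilon g^2$ and using $zg=f$ collapses the right-hand side to $g(1+\epsilon f)A'(f)$, and dividing by $g$ (valid because $(g,zg)$ is proper, so $g(0)\not\equiv 0$) gives $(1+\epsilon f)A'(f)\equiv \epsilon A(f)$. Because $f'(0)=g(0)\not\equiv 0$, the series $f$ has a compositional inverse $\overline f\in\mathbb{Z}_3[[z]]$, and precomposing with $\overline f$ produces the ODE
\[
(1+\epsilon z)A'(z)\equiv \epsilon A(z).
\]
All manipulations are reversible ($1+\epsilon f$ and $1-zA'(f)$ both have constant term $1$ and are thus invertible), so the ODE is in fact equivalent to $g'\equiv \epsilon g^2$.

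The last step is to solve the ODE over $\mathbb{Z}_3$ by equating coefficients of $z^k$: one reads off the three-term recurrence $(k+1)a_{k+1}\equiv \epsilon(1-k)a_k$. A case split on $k\bmod 3$ finishes the job: when $k=3m$ we get $a_{3m+1}\equiv \epsilon a_{3m}$; when $k=3m+1$ we get $2a_{3m+2}\equiv 0$, hence $a_{3m+2}\equiv 0$; and when $k=3m+2$ the left side vanishes modulo~$3$, yielding the consistent condition $a_{3m+2}\equiv 0$ while leaving $a_{3m+3}$ free in $\{-1,0,1\}$. Combined with the normalization $a_0=1$, these constraints describe exactly the sequences \eqref{A-seq-i1} and \eqref{A-seq-i1-1}.

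I expect the only subtlety to be justifying the substitution $f\mapsto z$ and the various divisions in $\mathbb{Z}_3[[z]]$; both follow from properness of $(g,zg)$ (which gives $g(0)\not\equiv 0$ and hence the existence of $\overline f$) and from the obvious unit property of $1+\epsilon f$. Once those are in hand, the proof reduces to a short ODE derivation followed by a mod-$3$ case analysis.
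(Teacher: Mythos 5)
Your proposal is correct and follows essentially the same route as the paper: both reduce to $g'\equiv\pm g^2$ via Theorem \ref{i1-decomposable-Theorem}, differentiate the defining relation $g=A(zg)$ to get $A(zg)\equiv(\pm1+zg)A'(zg)$, pass to $A(z)$ by composing with the compositional inverse of $zg$, and read off that $a_{3m+1}\equiv\pm a_{3m}$ and $a_{3m+2}\equiv0$. Your coefficient recurrence $(k+1)a_{k+1}\equiv\epsilon(1-k)a_k$ is just a repackaging of the paper's expansion of $(\pm1+z)A'(z)$, and your explicit check of reversibility is, if anything, slightly more careful than the paper's.
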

\begin{proof}
Let $G_n^{\sigma}(g,zg)$ be i1-decomposable. Since there is a unique
generating function $A=\sum_{i\ge0}a_iz^i$ such that $g=A(zg)$, by
applying derivative to both sides, we obtain
\begin{align}\label{eq1}
g'\equiv(g+zg')\cdot A'(zg).
\end{align}
By Theorem \ref{i1-decomposable-Theorem}, the equation \eref{eq1} is
equivalent to
\begin{align}\label{eq2}
 g\equiv(\pm1+ zg)\cdot A'(zg),\;\;{\rm i.e.}\;
A(zg)\equiv(\pm1+ zg)\cdot A'(zg)
\end{align}
Let $f=zg$ and $A(z)=\sum_{n\ge0}a_nz^n$ with $a_0=1$. Since
$[z^0]f=0$ and $[z^1]f=1$, there is a composition inverse of $f$ so
that the equation \eref{eq2} is equivalent to either
\begin{align*}
\sum_{n\ge0}a_nz^n\equiv(1+z)\cdot
A'(z)\equiv\sum_{n\ge0}\left(a_{3i+1}+(a_{3i+1}-a_{3i+2})z-a_{3i+2}z^2\right)z^{3i}
\end{align*}
or
\begin{align*}
\sum_{n\ge0}a_nz^n\equiv(-1+ z)\cdot
A'(z)\equiv\sum_{n\ge0}\left(-a_{3i+1}+(a_{3i+1}+a_{3i+2})z-a_{3i+2}z^2\right)z^{3i}
\end{align*}
which implies the desired result.
\end{proof}

\begin{example}\label{ex}
{\rm Let $PG_n^\sigma=G_n^{\sigma}({1\over 1-z},{z\over 1-z})$ and
$CG_n^\sigma=G_n^{\sigma}(C,zC)$ where $C={1-\sqrt{1-4z}\over 2z}$
is the Catalan generating function. Since $A$-sequences of
$PG_n^\sigma$ and $CG_n^\sigma$ are $(1,1,0,\ldots)$ and
$(1,1,1,\ldots)$ respectively, by Theorem
\ref{i1-decomposable-A-seq} $PG_n^\sigma$ is i1-decomposable but
$CG_n^\sigma$ is not.}
\end{example}

\begin{theorem}\label{th1}
Let $G_n^\sigma=G_n^{\sigma}(g,zg)$ be an i1-decomposable graph with
$g'\equiv g^2$. Then we have the following:
\begin{itemize}
\item[(i)] If $n=3^{i}+1$ for $i\ge1$ then $n$th low of skew-adjacency matrix $\mathcal{S}(G_n^\sigma)$ is
given by
\begin{align*}
(1,-1,\ldots,1,-1,1,0);
\end{align*}
\item[(ii)] If $n=2\cdot3^{i}+1$ for $i\ge0$ then $n$th low of skew-adjacency matrix $\mathcal{S}(G_n^\sigma)$ is
given by
\begin{align*}
(\underbrace{1,-1,\ldots,1,-1,1}_{3^i\;{\rm
term}},\underbrace{1,-1,\ldots,1,-1,1}_{3^i\;{\rm term}},0).
\end{align*}
\end{itemize}
\end{theorem}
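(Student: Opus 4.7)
The plan is to translate each entry $s_{n,j}$ into a coefficient extraction on powers of $g$ and then exploit $g' \equiv g^2$ through the auxiliary series $h := 1/g$, combined with the Frobenius identity of Lemma~\ref{p-power}. From $\mathcal{S}(G_n^\sigma) \equiv (zg,f)_n - (zg,f)_n^T$ and $f = zg$, one obtains $s_{n,j} \equiv [z^{n-j-1}] g^j \pmod 3$. The hypothesis forces $h' = -g'/g^2 \equiv -1$, so $h(z) \equiv 1 - z + R(z^3) \pmod 3$ for some $R \in \mathbb{Z}_3[[z]]$; Lemma~\ref{p-power} further gives $g^{3^k} \equiv g(z^{3^k})$ and $h^{3^k} \equiv h(z^{3^k})$. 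These are the tools I would use throughout.

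The crux is two coefficient identities modulo $3$:
\begin{align*}
\text{(K1)}\quad &[z^m]\,h^m \equiv (-1)^m \qquad (m\geq 0),\\
\text{(K2)}\quad &[z^{3^i+m}]\,h^m \equiv 0 \qquad (i\geq 0,\ 0\leq m<3^i).
\end{align*}
I would prove (K1) by induction on $m$: when $m \not\equiv 0 \pmod 3$ the relation $(h^m)' \equiv -m\,h^{m-1}$ yields $[z^m]h^m \equiv -[z^{m-1}]h^{m-1}$; when $m = 3m'$, substituting $w = z^3$ in $h^{3m'} \equiv h(z^3)^{m'}$ gives $[z^{3m'}]h^{3m'} \equiv [w^{m'}]h^{m'}$, and the identity $(-1)^{3m'}=(-1)^{m'}$ closes the recursion. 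For (K2) I would set up a double induction, outer on $i$ and inner on $m$: the derivative step again dispatches $m \not\equiv 0 \pmod 3$ by dropping $m$ by one (inner IH); when $m = 3m'$, the same substitution converts $[z^{3^i+3m'}]h^{3m'}$ into $[w^{3^{i-1}+m'}]h^{m'}$, and since $m' < 3^{i-1}$ the outer IH at level $i-1$ gives $0$.

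Equipped with (K1) and (K2), both parts of the theorem follow from the factorizations $g^{3^i + \epsilon} \equiv g(z^{3^i})\,g^{\epsilon}$ and $g^{3^i - m} \equiv g(z^{3^i})\,h^m$, together with $g(z^{3^i}) \equiv 1 \pmod{z^{3^i}}$. For (i), writing $j = 3^i - m$ with $0 \leq m < 3^i$ produces $s_{n,j} \equiv [z^m]h^m \equiv (-1)^m$, which equals $(-1)^{j-1}$ since $3^i$ is odd. For (ii), entries with $j = 3^i + j'$, $1 \leq j' \leq 3^i$, reduce by the same factorization to the case-(i) value $(-1)^{j'-1}$, matching the second block; for $1 \leq j \leq 3^i$, writing $j = 3^i - m$ and expanding $g(z^{3^i})h^m$ at degree $3^i+m$ gives $s_{n,j} \equiv [z^{3^i+m}]h^m + [z^m]h^m \equiv 0 + (-1)^m$ by (K2) and (K1), matching the first block.

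I expect the main obstacle to be identity (K2): its double induction must correctly bound $m' < 3^{i-1}$ after the $w = z^3$ substitution so that the outer IH can be invoked at the reduced level, and the derivative step for $m \not\equiv 0 \pmod 3$ must be applied consistently within the inner induction.
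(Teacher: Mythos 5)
Your proposal is correct, and it reaches the result by a noticeably different route than the paper, even though both arguments ultimately rest on the same two mod-$3$ tools: the coefficient identity $N[z^N]u=[z^{N-1}]u'$ combined with $g'\equiv g^2$, and the Frobenius congruence of Lemma~\ref{p-power}. The paper works directly with $g$: applying the derivative identity to $g^k$ it shows that consecutive entries satisfy $s_{n,k}\equiv -s_{n,k+1}$ (and that no sign flip occurs at the block junction $k=3^i$ in part (ii)), handling the indices $k$ divisible by $3$ by writing $k=3^st$ and dividing the integer identity $(3^i-k)[z^{3^i-k}]g^k=k[z^{3^i-k-1}]g^{k-1}g'$ by $3^s$ before reducing; it then anchors the row by proving $s_{n,1}=[z^{3^i-1}]g\equiv1$ separately from $g''\equiv-g(z^3)$. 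You instead linearize the hypothesis by passing to $h=1/g$, for which $g'\equiv g^2$ becomes $h'\equiv-1$, and package the whole computation into the two coefficient lemmas (K1) and (K2); the troublesome indices $m\equiv0\pmod 3$ are dispatched by the Frobenius substitution $h^{3m'}\equiv h^{m'}(z^3)$ rather than by dividing out powers of $3$, and the double induction for (K2) is set up correctly (the bound $m'<3^{i-1}$ does hold after the substitution, and $3^i+m\equiv m\pmod 3$ makes the derivative step invertible exactly when $m\not\equiv0$). This buys you a closed form $s_{n,j}\equiv(-1)^{j-1}$ for every entry at once, makes the anchor $s_{n,1}\equiv1$ and the junction behaviour in (ii) automatic consequences of (K1)/(K2) rather than separate computations, and the two lemmas generalize verbatim to the $p$-Riordan setting of the last section; the cost is the extra object $h$ and the somewhat heavier induction. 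One small point to make explicit in a final write-up: when you split $[z^{3^i+m}]g(z^{3^i})h^m$ into $[z^{3^i+m}]h^m+[z^m]h^m$ you are using $[z^0]g\equiv[z^1]g\equiv1$, which follows from $h\equiv1-z+R(z^3)$ via $gh=1$, so it should be stated rather than left implicit.
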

\begin{proof} (i) Let $\mathcal{S}(G_n^\sigma)=(s_{i,j})_{1\le i,j\le n}$ and $n=3^i+1$. First we show that
$s_{n,k}=-s_{n,k}$ for $1\le k<n-1$. There exits an integer
$t\not\equiv0$ such that $k=3^st$ for some nonnegative integer $s<
i$. Since $[z^{m-1}]g'=m[z^m]g$ and $g'\equiv g^2$, we obtain
\begin{align*}
ts_{n,k}&\equiv t[z^{3^i-1}]z^{k-1}g^k\equiv -{(3^i-k)[z^{3^i-k}]g^k\over 3^s}=-{[z^{3^i-k-1}](g^k)'\over 3^s}\\
&=-{k[z^{3^i-k-1}]g^{k-1}g'\over
3^s}\equiv-t[z^{3^i-1}]z^{k}g^{k+1}\equiv-ts_{n,k+1}.
\end{align*}
Thus we have $s_{n,k}=-s_{n,k+1}$ for all $k=1,\ldots,n-1$.

Now it is enough to show that $s_{n,1}\equiv[z^{3^i-1}]g\equiv1$.
Since
\begin{align*}
g'(z)\equiv
g^2(z)\;\Rightarrow\;g''(z)=2g(z)g'(z)\equiv-g^3(z)\equiv-g(z^3),
\end{align*}
by comparing the coefficients of $g''(z)$ and $g(z^3)$ we obtain
\begin{align*}
[z^j]g\equiv [z^{3j+2}]g\;\Rightarrow\;[z^{3^j-1}]g\equiv[z^0]g=1.
\end{align*}
Hence we complete the proof.

 (ii) Let $\mathcal{S}(G_n^\sigma)=(s_{i,j})_{1\le i,j\le n}$ and $n=2\cdot3^i+1$. First we show that
$s_{n,k}=-s_{n,k}$ for $1\le k\le3^i$. There exits an integer
$t\not\equiv0$ such that $k=3^st$ for some nonnegative ineger $s<
i$. Since $[z^{m-1}]g'=m[z^m]g$ and $g'\equiv g^2$, we obtain
\begin{align*}
ts_{n,k}&\equiv t[z^{2\cdot3^i-1}]z^{k-1}g^k\equiv -{(2\cdot3^i-k)[z^{2\cdot3^i-k}]g^k\over 3^s}=-{[z^{2\cdot3^i-k-1}](g^k)'\over 3^s}\\
&=-{k[z^{2\cdot3^i-k-1}]g^{k-1}g'\over
3^s}\equiv-t[z^{2\cdot3^i-1}]z^{k}g^{k+1}\equiv-ts_{n,k+1}.
\end{align*}
Thus we obtain
\begin{align}\label{eq6}
s_{n,k}=-s_{n,k}\;\;(1\le k<3^i).
\end{align}
Similarly, we can show that
\begin{align}\label{eq7}
s_{n,k}=-s_{n,k}\;\;(3^i+1\le k\le 2\cdot3^i).
\end{align}Since
\begin{align*}
s_{n,3^i}&\equiv [z^{2\cdot3^i-1}]z^{3^i-1}g^{3^i}\equiv{3^i[z^{3^i}]g^{3^i}\over 3^i}={[z^{3^i-1}](g^{3^i})'\over 3^i}\\
&={3^i[z^{3^i-1}]g^{3^i-1}g'\over
3^i}\equiv[z^{2\cdot3^i-1}]z^{3^i}g^{3^i+1}\equiv s_{n,3^i+1},
\end{align*}
by \eref{eq6} and \eref{eq7} $n$th low of skew-adjacency matrix
$\mathcal{S}(G_n^\sigma)$ is given by
\begin{align*}
(\underbrace{x,-x,\ldots,x,-x,x}_{3^i\;{\rm
term}},\underbrace{x,-x,\ldots,x,-x,x}_{3^i\;{\rm term}},0)
\end{align*}
where $x=s_{n,1}$.

Now it is enough to show that
$s_{n,1}\equiv[z^{2\cdot3^i-1}]g\equiv1$. Since from \eref{A-seq-i1}
we have $[g^1]g=[z^0]g=1$, by comparing the coefficients of $g''(z)$
and $g(z^3)$ we obtain $[z^{2\cdot3^j-1}]g\equiv[z^1]g=1$. Hence we
complete the proof.
\end{proof}

\begin{example}
{\rm Let us consider the oriented Pascal graph
$PG_n^\sigma=G_n^{\sigma}({1\over 1-z},{z\over 1-z})$. Since
$PG_n^\sigma$ is i1-decomposable by Example \ref{ex} and $g'={1\over
1-z}'={1\over (1-z)^2}=g^2$, it follows from (ii) of Theorem
\ref{th1} that the 19th low of $PG_{19}$ is given by
\begin{align*}
(1,-1,1,-1,-1,1,-1,-1,1,1,-1,1,-1,-1,1,-1,-1,1,0),\;\; \textrm{see
Figure \ref{PG19}}.
\end{align*}
}
\end{example}

By using the similar argument of Theorem \ref{th1}, we obtain the
following result.

\begin{theorem}\label{th2}
Let $G_n^\sigma=G_n^\sigma(g,zg)$ be an i1-decomposable graph with
$g'\equiv-g^2$. Then we have the following:
\begin{itemize}
\item[(i)] If $n=3^{i}+1$ for $i\ge1$ then $n$th low of skew-adjacency matrix $\mathcal{S}(G_n^\sigma)$ is
given by
\begin{align*}
(1,1,\ldots,1,0);
\end{align*}
\item[(ii)] If $n=2\cdot3^{i}+1$ for $i\ge0$ then $n$th low of skew-adjacency matrix $\mathcal{S}(G_n^\sigma)$ is
given by
\begin{align*}
(\underbrace{-1,-1,\ldots,-1}_{3^i\;{\rm
term}},\underbrace{1,1,\ldots,1}_{3^i\;{\rm term}},0).
\end{align*}
\end{itemize}
\end{theorem}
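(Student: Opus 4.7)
The plan is to imitate the proof of Theorem \ref{th1} line by line, with the hypothesis $g'\equiv g^2$ replaced by $g'\equiv -g^2$, and to track where this single sign flip propagates. With $\mathcal{S}(G_n^\sigma)=(s_{i,j})$ and $f=zg$, I have $s_{n,k}\equiv [z^{n-k-1}]g^k$ for $1\le k<n$. The identity $(g^k)'=kg^{k-1}g'\equiv -kg^{k+1}$ (this is where the hypothesis enters, introducing the extra sign relative to Theorem \ref{th1}) together with $(n-1-k)[z^{n-1-k}]g^k=[z^{n-2-k}](g^k)'$ yields, after writing $k=3^st$ with $\gcd(t,3)=1$ and dividing both sides by $3^s$, the relation
\[
\tfrac{n-1-k}{3^s}\,s_{n,k}\equiv -t\,s_{n,k+1}\pmod 3.
\]

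For (i), $n=3^i+1$ forces $s<i$ for every $1\le k\le n-2$, and $(n-1-k)/3^s=3^{i-s}-t\equiv -t\pmod 3$; the recursion becomes $-t\,s_{n,k}\equiv -t\,s_{n,k+1}$, so $s_{n,k}\equiv s_{n,k+1}$ throughout, and the first $n-1$ entries are constant. For (ii), $n=2\cdot 3^i+1$, and I would split at the pivot $k=3^i$. In either range $1\le k<3^i$ or $3^i<k<2\cdot 3^i$ the $3$-adic valuation of $k$ is strictly less than $i$ (in the second range because $3^i\nmid k$ there); then $(2\cdot 3^i-k)/3^s\equiv -t\pmod 3$ and the recursion again gives $s_{n,k}\equiv s_{n,k+1}$. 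At $k=3^i$ itself the factor $(2\cdot 3^i-3^i)/3^i=1$ is nonzero mod $3$, so the recursion produces the sign flip $s_{n,3^i}\equiv -s_{n,3^i+1}$ that separates the two constant halves.

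It remains to identify $s_{n,1}$. Differentiating $g'\equiv -g^2$ gives $g''\equiv -2gg'\equiv 2g^3\equiv -g(z^3)$ by Lemma \ref{p-power}, the same identity used in Theorem \ref{th1}. Comparing coefficients of $z^{3j}$ on both sides gives $[z^{3j+2}]g\equiv [z^j]g$ for every $j\ge 0$. Iterating yields $[z^{3^i-1}]g\equiv [z^0]g=1$ for (i), whence the row is $(1,1,\ldots,1,0)$; and $[z^{2\cdot 3^i-1}]g\equiv [z^1]g$ for (ii), where the final value $[z^1]g=a_1=-1$ is forced by the $A$-sequence of the form \eref{A-seq-i1-1}, namely the case of Theorem \ref{i1-decomposable-A-seq} selected by $g'\equiv -g^2$. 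Combining with the previous paragraph yields the row $(\underbrace{-1,\ldots,-1}_{3^i\text{ terms}},\underbrace{1,\ldots,1}_{3^i\text{ terms}},0)$.

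The only step requiring genuine care is the $3$-adic valuation check at the boundary $k=3^i$ in (ii), to guarantee that the sign change occurs at exactly this pivot and nowhere else; beyond that, the argument is a direct transcription of Theorem \ref{th1} with one uniform sign change tracked through the recursion, the key observation being that $g''\equiv -g(z^3)$ holds under either hypothesis $g'\equiv \pm g^2$, so the coefficient-lifting identity that determines $s_{n,1}$ is unchanged.
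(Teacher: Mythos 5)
Your proof is correct and follows essentially the same route as the paper's: the recursion obtained by differentiating $g^k$ and substituting $g'\equiv -g^2$, combined with $g''\equiv -g(z^3)$ to pin down $s_{n,1}$. You in fact go further than the printed proof, which only writes out part (i); your analysis of the pivot $k=3^i$ in part (ii) (where the $3$-adic valuation jumps and the unique sign flip occurs) together with the identification $s_{n,1}\equiv[z^1]g=a_1=-1$ correctly supplies the case the paper omits.
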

\begin{proof}
 (i) Let $\mathcal{S}(G_n^\sigma)=(s_{i,j})_{1\le i,j\le n}$ and $n=3^i+1$. First we show that
$s_{n,k}=s_{n,k}$ for $1\le k<n-1$. There exits an integer
$t\not\equiv0$ such that $k=3^st$ for some nonnegative integer $s<
i$. Since $[z^{m-1}]g'=m[z^m]g$ and $g'\equiv-g^2$, we obtain
\begin{align*}
ts_{n,k}&\equiv t[z^{3^i-1}]z^{k-1}g^k\equiv -{(3^i-k)[z^{3^i-k}]g^k\over 3^s}=-{[z^{3^i-k-1}](g^k)'\over 3^s}\\
&=-{k[z^{3^i-k-1}]g^{k-1}g'\over 3^s}\equiv
t[z^{3^i-1}]z^{k}g^{k+1}\equiv ts_{n,k+1}.
\end{align*}
Thus we have $s_{n,k}=s_{n,k+1}$ for all $k=1,\ldots,n-1$.

Now it is enough to show that $s_{n,1}\equiv[z^{3^i-1}]g\equiv1$.
Since
\begin{align*}
g'(z)\equiv -g^2(z)\;\Rightarrow\;g''(z)=-2g(z)g'(z)\equiv
-g^3(z)\equiv -g(z^3),
\end{align*}
by comparing the coefficients of $g''(z)$ and $g(z^3)$ we obtain
\begin{align*}
[z^{3j+2}]g\equiv [z^j]g,\;j\ge0
\;\Rightarrow\;[z^{3^i-1}]g\equiv[z^0]g=1,\;i\ge0.
\end{align*}
Hence we complete the proof.
\end{proof}

\section{$p$-Riordan graphs}

\begin{definition}\cite{GX} {\rm Let $G^\sigma$ be a simple weighted undirected graph with an orientation
$\sigma$, which assigns to each edge a direction so that $G^\sigma$
becomes a {\em weighted oriented graph}. (We will refer to an
unweighted oriented graph which is just a weighted oriented graph
with weight of each arc equals to 1.) The skew-adjacency matrix
associated to the weighted oriented graph $G^\sigma$ with the vertex
set $[n]$ is defined as the $n\times n$ matrix
$\mathcal{S}(G^\sigma)=(s_{i,j})_{1\le i,j\le n}$ whose
$(i,j)$-entry satisfies:
\begin{align*}
s_{i,j}=\left\{
\begin{array}{lll}
\omega,&
\text{if there is an arc with weight $\omega$ from $i$ to $j$;} \\
-\omega, & \text{if there is an arc with weight $\omega$ from $j$ to
$i$;} \\
0, & \text{otherwise.}
\end{array}
\right.
\end{align*}
In particular, when $\omega=1$ the weighted oriented graph
$G^\sigma$ is the oriented graph.}
\end{definition}

\begin{definition}
{\rm Let $p\ge3$ be prime.  An oriented graph $G^\sigma$ with $n$
vertices is called a \emph{$p$-Riordan graph} if there exists a labeling $1,2,\ldots,n$ of
$G^\sigma$ such that its skew-adjacency matrix
$\mathcal{S}(G^\sigma)=[s_{ij}]_{1\le i,j\le n}$ is given by
\begin{align*}
\mathcal{S}(G^\sigma)\equiv (zg,f)_n-(zg,f)_n^T\;({\rm mod p})\;\;
{\rm and}\;\;s_{i,j}\in\{\lfloor
p/2\rfloor,\ldots,-1,0,1,\ldots,\lfloor p/2\rfloor\}.
\end{align*}
We denote such graph by $G^\sigma_{n,p}(g,f)$, or simply by
$G^\sigma_{n,p}$ when the Riordan array $(g,f)$ is understood from
the context, or it is not important.}
\end{definition}

\begin{proposition}
The number of weighted oriented $p$-Riordan graphs of order $n\ge1$
is
\begin{align*}
{p^{2(n-1)}+p\over p+1}.
\end{align*}
\end{proposition}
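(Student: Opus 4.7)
The plan is to imitate the counting argument of Proposition~\ref{num-graphs} with $3$ replaced by $p$. Let $G_{n,p}^\sigma = G_{n,p}^\sigma(g,f)$ be a weighted oriented $p$-Riordan graph on $n \ge 1$ vertices and let $i$ denote the smallest index with $g_i \not\equiv 0 \pmod{p}$. If $i \ge n-1$, every entry of $(zg,f)_n$ vanishes modulo $p$, so $G_{n,p}^\sigma$ is the null graph $N_n$; this contributes a single possibility.

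If instead $0 \le i \le n-2$, I note that only the coefficients of $g$ up to degree $n-2$ and of $f$ up to degree $n-2-i$ influence $(zg,f)_n$, so I may assume $g = \sum_{k=i}^{n-2} g_k z^k$ and $f = \sum_{k=1}^{n-2-i} f_k z^k$. The leading coefficient $g_i$ is a nonzero residue mod $p$, represented by one of the $p-1$ elements of $\{\pm 1, \pm 2, \ldots, \pm \lfloor p/2 \rfloor\}$, while each of the remaining $2(n-i-2)$ coefficients $g_{i+1}, \ldots, g_{n-2}, f_1, \ldots, f_{n-2-i}$ is free to take any of the $p$ values in $\{0, \pm 1, \ldots, \pm \lfloor p/2 \rfloor\}$. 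This gives $(p-1)\, p^{2(n-i-2)}$ admissible matrices for a given $i$. Summing across cases and using the factorization $p^2 - 1 = (p-1)(p+1)$ to collapse the geometric series yields
\begin{align*}
1 + (p-1)\sum_{i=0}^{n-2} p^{2(n-i-2)}
= 1 + \frac{p^{2(n-1)}-1}{p+1}
= \frac{p^{2(n-1)}+p}{p+1},
\end{align*}
as claimed.

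The only step requiring genuine care is the verification that distinct choices of truncated $(g,f)$ produce distinct leading principal matrices $(zg,f)_n \bmod p$, and that every $p$-Riordan skew-adjacency matrix arises from exactly one such choice. Reading down the first column of $(zg,f)_n$ recovers $g_0, g_1, \ldots, g_{n-2} \bmod p$, and then the higher-indexed columns determine $f_1, f_2, \ldots, f_{n-2-i} \bmod p$ recursively from the Riordan-array identity $[z^{k}]gf^{j+1} = \sum_{m\ge 1} f_m [z^{k-m}] gf^j$. Apart from this bookkeeping, the argument is a routine extension of the $p=3$ case.
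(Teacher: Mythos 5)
Your argument is correct: the paper actually states this proposition without proof, and your counting is the natural generalization of the paper's own proof of Proposition~\ref{num-graphs}, replacing the $2$ choices for the leading coefficient $g_i$ by $p-1$ nonzero residues and the $3$ choices for each remaining coefficient by $p$, after which the geometric series collapses to $(p^{2(n-1)}+p)/(p+1)$ exactly as in the $p=3$ case. Your added remark on recovering $g$ from the first column and $f$ recursively from the higher columns supplies the injectivity step that the paper's $p=3$ proof also leaves implicit, so nothing is missing.
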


\begin{definition}{\rm Let $G^\sigma_{n,p}=[s_{i,j}]_{1\le i,j\le n}$ be a weighted oriented $p$-Riordan graph. A pair of vertices $\{k,t\}$ in $G$ is
a {\em weighted cognate pair} with a pair of vertices $\{i,j\}$ in
$G$ if
\begin{itemize}
\item $|i-j|=|k-t|$ and
\item an arc $i\rightarrow j$ with the weight $s_{i,j}$ if and only if an arc $k\rightarrow t$ with the weight $s_{i,j}$.
\end{itemize}
The set of all weighted cognate pairs of $\{i,j\}$ is denoted by
wcog$(i,j)$.}
\end{definition}

In this section, we simply denote $a\equiv_p b$ if $a\equiv_p b$
(mod $p$).

\begin{lemma}\label{p-power}
Let $g,f\in\mathbb{Z}[[z]]$ with $f(0)=0$. For a prime $p$ and
$k\ge0$, we obtain
\begin{eqnarray*}
g^{p^k}(f)\equiv_p g(f^{p^k}).
\end{eqnarray*}
\end{lemma}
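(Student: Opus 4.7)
The plan is to use the Frobenius endomorphism (Freshman's Dream) in characteristic $p$, combined with Fermat's little theorem. The identity $g(f)^{p^k}\equiv g(f^{p^k})\pmod{p}$ is essentially the statement that, modulo $p$, the $p^k$-th power operator commutes with composition of power series.

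First I would establish the key tool: for any two formal power series $h_1,h_2\in\mathbb{Z}[[z]]$,
\begin{equation*}
(h_1+h_2)^p\equiv h_1^p+h_2^p\pmod{p},
\end{equation*}
which follows at the level of each coefficient from the binomial theorem together with $p\mid\binom{p}{i}$ for $0<i<p$. By a routine induction this extends to arbitrary finite sums, $(h_1+\cdots+h_m)^p\equiv h_1^p+\cdots+h_m^p\pmod{p}$, and by iterating the $p$-th power operation $k$ times,
\begin{equation*}
(h_1+\cdots+h_m)^{p^k}\equiv h_1^{p^k}+\cdots+h_m^{p^k}\pmod{p}.
\end{equation*}

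Next I would write $g(z)=\sum_{n\ge 0}g_n z^n$ so that $g(f)=\sum_{n\ge 0}g_n f^n$. Because $f(0)=0$, the series $f^n$ has order $\ge n$, so for every fixed coefficient only finitely many summands contribute and the Freshman's Dream above may be applied term by term. Thus
\begin{equation*}
g(f)^{p^k}\equiv\sum_{n\ge 0}g_n^{p^k}f^{np^k}\pmod{p}.
\end{equation*}
Now Fermat's little theorem gives $g_n^p\equiv g_n\pmod{p}$ for each integer $g_n$, and iterating yields $g_n^{p^k}\equiv g_n\pmod p$. Substituting this back,
\begin{equation*}
g(f)^{p^k}\equiv\sum_{n\ge 0}g_n(f^{p^k})^n=g(f^{p^k})\pmod{p},
\end{equation*}
which is the desired congruence.

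The only mild subtlety, and what I would flag as the main point of care, is the passage from the finite-sum Freshman's Dream to the infinite power series identity; this is handled by the observation that $f(0)=0$ makes the family $\{g_n f^n\}$ summable in the $z$-adic topology, so truncating $g$ at any degree introduces only an error supported in sufficiently high degree and the coefficient-wise congruence is unaffected.
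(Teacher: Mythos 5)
Your proof is correct: the paper states this lemma without proof, and your argument via the Freshman's Dream in $\mathbb{Z}[[z]]$ together with Fermat's little theorem, with the $z$-adic summability remark justifying the passage from finite sums to the full series, is the standard one. The only step left implicit is the identification $g^{p^k}(f)=\bigl(g(f)\bigr)^{p^k}$ needed to match the statement as written, which holds because substitution of $f$ is a ring homomorphism on $\mathbb{Z}[[z]]$.
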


The following theorem gives a relationship between weighted cognate
pair cognate pairs and the $A$-sequence of a Riordan graph.

\begin{theorem}\label{e:cognate} For $\ell\ge 0$, let
$A=(a_k)_{k\ge0}=(a_0,\underbrace{0,\ldots,0}_{\ell \;{\rm
times}},a_{\ell+1},a_{\ell+2},\ldots)$ with
$a_0,a_{\ell+1}\not\equiv_p0$ be the $p$-ary $A$-sequence for a
weighted oriented $p$-Riordan graph $G_{n,p}^\sigma(g,f)$ where
$f\ne z$ and $f'(0)=1$. Then
\begin{align*}
\mbox{\em wcog}(i,j)=\left\{\left\{i+mp^{s}, j+mp^{s}\right\}\ |\
i+mp^{s},\ j+mp^{s}\in[n]\right\}
\end{align*}
 where $s\ge0$ is an integer with $p\lfloor(|i-j|-1)/p^s\rfloor\le \ell$.
\end{theorem}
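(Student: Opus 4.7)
The plan is to mirror the proof of Theorem \ref{e:cognate} (the $p=3$ case), substituting the prime $p$ throughout and invoking the version of Lemma \ref{p-power} that holds for every prime. Fix $i>j\ge 1$, write $\mathcal{S}(G_{n,p}^\sigma)=(r_{u,v})$, and examine the entry $r_{i+p^s,j+p^s}$ for a nonnegative integer $s$. Starting from $\mathcal{S}(G_{n,p}^\sigma)\equiv_p (zg,f)_n-(zg,f)_n^T$ and the $A$-sequence identity $f=zA(f)$, I would compute
\begin{align*}
r_{i+p^s,j+p^s}&\equiv_p [z^{i+p^s-2}]\,gf^{j+p^s-1}=[z^{i+p^s-2}]\,gf^{j-1}(zA(f))^{p^s}\\
&\equiv_p [z^{i-2}]\,gf^{j-1}A(f^{p^s})=[z^{i-2}]\sum_{k\ge 0}a_k\,gf^{j-1+kp^s}\equiv_p \sum_{k=0}^{\alpha}a_k\,r_{i,j+kp^s},
\end{align*}
where the middle equivalence uses Lemma \ref{p-power} to replace $A(f)^{p^s}$ by $A(f^{p^s})$, and $\alpha=\lfloor(i-j-1)/p^s\rfloor$ is the largest index of $k$ for which $j-1+kp^s\le i-2$.

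Next, since $f'(0)=1$ forces $a_0=1$ and by hypothesis $a_1=\cdots=a_\ell=0$, the sum collapses to $r_{i+p^s,j+p^s}\equiv_p r_{i,j}$ precisely when $\alpha\le \ell$, i.e.\ whenever $s$ satisfies the bound $\lfloor(|i-j|-1)/p^s\rfloor\le \ell$ in the hypothesis. Because the allowed weights $\{-\lfloor p/2\rfloor,\ldots,\lfloor p/2\rfloor\}$ form a complete system of distinct residues modulo $p$, this congruence promotes to an equality of weights, so $\{i+p^s,j+p^s\}\in\mathrm{wcog}(i,j)$.

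Finally, iterating the shift by $p^s$ (and reversing it by working with $i-p^s$, $j-p^s$ while the indices remain in $[n]$) yields $r_{i+mp^s,j+mp^s}=r_{i,j}$ for every integer $m$ with $i+mp^s,j+mp^s\in[n]$. Since the shift preserves $|i-j|$, these are all weighted cognate pairs and they exhaust $\mathrm{wcog}(i,j)$ under the stated bound on $s$.

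The only genuinely new ingredient relative to the $p=3$ case is the freshman's-dream identity $A(f)^{p^s}\equiv_p A(f^{p^s})$, which is exactly Lemma \ref{p-power}; the rest is bookkeeping. The main subtle point worth emphasising in the write-up is the step from a mod-$p$ congruence between $r_{i+p^s,j+p^s}$ and $r_{i,j}$ to an actual equality of weights — this relies on the weight range being a complete set of representatives mod $p$, and is precisely what makes the wcog conclusion (equal weights, not merely equal residues) legitimate.
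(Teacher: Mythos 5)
Your argument is correct and coincides with the approach the paper itself uses: the general-$p$ theorem is stated without proof, and your computation via $f=zA(f)$, the freshman's-dream lemma $A(f)^{p^s}\equiv_p A(f^{p^s})$, and the truncation at $\alpha=\lfloor(i-j-1)/p^s\rfloor$ is exactly the paper's proof of the $p=3$ case with $3$ replaced by $p$. Your added observation that the weight range $\{-\lfloor p/2\rfloor,\ldots,\lfloor p/2\rfloor\}$ is a complete residue system mod $p$, so the congruence upgrades to an equality of weights, is a point the paper leaves implicit and is worth keeping.
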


\begin{theorem}\label{e:coro2}
For $\ell\ge 0$, let
$A=(a_k)_{k\ge0}=(1,\underbrace{0,\ldots,0}_{\ell \;{\rm
times}},a_{\ell+1},a_{\ell+2},\ldots)$ with
$a_{\ell+1}\not\equiv_p0$ be the $p$-ary $A$-sequence for a weighted
oriented Riordan graph $G_{n,p}^\sigma=G_{n,p}^\sigma(g,f)$ where
$f\ne z$ and $f'(0)=1$. For each $s\ge 0$ and
$k\in\{0,\ldots,\ell\}$, $G_{n,p}^\sigma$ has the following fractal
properties:
\begin{itemize}
\item[{\rm(i)}] $\left<\{1,\ldots,(k+1)p^{s}+1\}\right>\cong
\left<\{\alpha(k+1)p^{s}+1,\ldots,(\alpha+1)(k+1)p^{s}+1\}\right>$
\item[{\rm(ii)}]$\left<\{1,\ldots,(k+1)p^{s}\}\right>\cong
\left<\{\alpha(k+1)p^{s}+1,\ldots,(\alpha+1)(k+1)p^{s}\}\right>$
\end{itemize}
where $\alpha\ge1$.
\end{theorem}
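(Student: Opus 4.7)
The plan is to prove Theorem \ref{e:coro2} as a direct $p$-analogue of Theorem \ref{ORG-fractal}, since the cognate pair machinery has already been developed in Theorem \ref{e:cognate} of this section. The whole argument rests on translating the index-bound hypothesis of Theorem \ref{e:cognate} into the isomorphism of induced subgraphs.

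First I would fix $s \ge 0$, $k \in \{0, \ldots, \ell\}$, and $\alpha \ge 1$, and pick arbitrary vertices $i, j$ in the block $\{1, \ldots, (k+1)p^s + 1\}$. The key estimate is
\begin{align*}
\left\lfloor \frac{|i-j|-1}{p^s} \right\rfloor \le \left\lfloor \frac{(k+1)p^s - 1}{p^s} \right\rfloor = k \le \ell,
\end{align*}
which is exactly the hypothesis needed to invoke Theorem \ref{e:cognate} for the $p$-ary $A$-sequence with prefix of zeros of length $\ell$. (As in the proof of Theorem \ref{ORG-fractal}, this is what drives the fractal scaling; the shift quantum is $(k+1)p^s$ because we can tolerate index differences up to that size before the nonzero tail of $A$ interferes.)

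Second, applying Theorem \ref{e:cognate} with $m = \alpha(k+1)$ gives
\begin{align*}
\{i + \alpha(k+1)p^s,\ j + \alpha(k+1)p^s\} \in \mbox{wcog}(i,j)
\end{align*}
whenever both shifted indices lie in $[n]$. By definition of a weighted cognate pair, there is an arc $i \to j$ in $G_{n,p}^\sigma$ with weight $\omega$ if and only if there is an arc $i + \alpha(k+1)p^s \to j + \alpha(k+1)p^s$ with the same weight $\omega$. Therefore the map $v \mapsto v + \alpha(k+1)p^s$ is a weighted-oriented-graph isomorphism between the induced subgraphs in (i). For (ii), the same argument applies verbatim after restricting to $\{1, \ldots, (k+1)p^s\}$, since the upper bound on $|i-j|$ only shrinks and the cognate-pair bound is unchanged.

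The main obstacle is conceptual rather than technical: it is ensuring that the hypothesis of Theorem \ref{e:cognate} really is satisfied for every pair of indices drawn from the chosen block, which is precisely the estimate above. Once that is in hand, Theorem \ref{e:cognate} does all the combinatorial work and the isomorphisms (i) and (ii) follow immediately by the shift map. No new computation with generating functions or $A$-sequences is required, since the $p$-ary analogues of Lemma \ref{p-power} and of the fundamental theorem have already been recorded in this section.
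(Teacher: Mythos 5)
Your proposal is correct and follows essentially the same route the paper takes: the paper states Theorem \ref{e:coro2} without proof as the direct $p$-analogue of Theorem \ref{ORG-fractal}, whose proof is exactly your argument with $p=3$ — the floor estimate $\lfloor(|i-j|-1)/3^s\rfloor\le\ell$ followed by an application of the cognate-pair theorem with shift $\alpha(k+1)3^s$. Your only additions (tracking the weight $\omega$ via wcog and noting the verbatim restriction for part (ii)) are exactly what the generalization requires.
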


\begin{theorem}  Let $G_{n,p}^\sigma=G_{n,p}^\sigma(g,f)$ be a weighted oriented $p$-Riordan graph of order $n\ge p$ and $V_i=\{j\in[n]\;|\;j\equiv_pi\}$.
If $G_n^{(p)}$ is proper then
\begin{itemize}
\item[{\rm(i)}] There exists a permutation matrix $P$ such that the skew-adjacency matrix $\mathcal{S}(G_n^{(p)})$ satisfies
\begin{eqnarray}\label{e:bm-p}
\mathcal{S}(G_{n,p}^\sigma)=P^{T}\left(
\begin{array}{cccc}
X_1 & -B_{1,2}  &\cdots & -B_{1,p}  \\
B_{1,2}^T & X_2 & \ddots &\vdots \\
\vdots & \ddots &\ddots &-B_{p-1,p}\\
B_{1,p}^T &\cdots & B_{p-1,p}^T & X_p
\end{array}
\right)P
\end{eqnarray}
where $P=\left[e_{1}|e_{p+1}|\cdots|e_{p\lceil n/p\rceil
-p+1}|\cdots|e_{p}|e_{2p} \;|\; \cdots \;|\; e_{p\lfloor n/p\rfloor
}\right] ^{T}$ is the $n\times n$ permutation matrix and $e_{i}$ is
the elementary column vector with the $i$th entry being $1$ and the
others entries being $0$.
\item[{\rm(ii)}] The matrix $X_t$ is the skew-adjacency matrix of the
induced subgraph of $G_{n,p}^\sigma$ by $V_t$. In particular, the
induced subgraph $\left<V_t\right>$ is isomorphic to a weighted
oriented $p$-Riordan graph of order $\ell_t$ given~by
\begin{align*}
G_{\ell_t,p}^\sigma\left(-{d^{p-1}\over
dz^{p-1}}\left({gf^{t-1}\over
z^{t-1}}\right)(\sqrt[p]{z}),f(z)\right).
\end{align*}
\item[{\rm(iii)}] The matrix $B_{t,k}$ representing the edges between $V_t$ and $V_k$
can be expressed as the sum of two matrices as follows:
\begin{align*}
B_{t,k}\equiv_p\left(-z{d^{p-1}\over dz^{p-1}}\left({gf^{k-1}\over
z^{t-1}}\right)(\sqrt[p]{z}),f(z)\right)_{\ell_t
\times\ell_k}+\left({d^{p-1}\over
dz^{p-1}}(z^{p-k+1}gf^{t-1})(\sqrt{z}),f(z)\right)_{\ell_k\times\ell_t}^{T}.
\end{align*}
\end{itemize}
where $\ell_t=\lfloor (n-t)/p\rfloor+1$.
\end{theorem}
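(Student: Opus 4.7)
The plan is to mirror the proof of Theorem \ref{e:th} (the $p=3$ decomposition) almost verbatim, replacing the modulus $3$ by the odd prime $p$ and invoking the general-prime versions of Lemma \ref{p-power} and Lemma \ref{p-1-derivitive}. For part (i), the permutation matrix $P$ reorders the vertices so that the residue class $V_t=\{j\in[n]\mid j\equiv_p t\}$ appears as the $t$-th consecutive block. This rearrangement automatically yields the stated block structure, with $X_t$ equal to the restriction of $\mathcal{S}(G_{n,p}^\sigma)$ to $V_t$ and $B_{t,k}$ equal to the bipartite slice between $V_t$ and $V_k$, the signs being forced by the skew symmetry of $\mathcal{S}$.

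For part (ii), I would analyze the entries $r_{i,j}\equiv_p[z^{i-2}]gf^{j-1}$ lying in $X_t$, i.e.\ with $i=p(u-1)+t$ and $j=p(v-1)+t$. Starting from $f=zA(f)$ and applying Lemma \ref{p-power} to obtain $f^p\equiv_p z^pA(f^p)$, one gets the recursion
\[
r_{i,j}\equiv_p\sum_{k=0}^{\lfloor(i-j-1)/p\rfloor}a_k\,r_{i-p,\,j-p+pk},
\]
generalizing \eref{e:eq}. This shows that the column-generating function of $X_t$ (in the variable $z^p$) is obtained by picking every $p$-th coefficient of $gf^{t-1}/z^{t-1}$, and by Lemma \ref{p-1-derivitive} those coefficients are packaged precisely by $-\tfrac{d^{p-1}}{dz^{p-1}}(gf^{t-1}/z^{t-1})(\sqrt[p]{z})$, identifying $\langle V_t\rangle$ with the claimed $p$-Riordan graph.

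For part (iii), the same recipe applies to the rectangular block $B_{t,k}$. Its entries come from the original positions $(p(u-1)+t,\,p(v-1)+k)$, and the skew-adjacency matrix records them with opposite signs depending on whether the row-index exceeds the column-index. I split $B_{t,k}$ accordingly: the lower-triangular contribution $L_{t,k}$ arises from $[z^{p(u-1)+t-2}]gf^{p(v-1)+k-1}$ and is, via Lemma \ref{p-1-derivitive}, the leading principal submatrix of the Riordan array $(-z\tfrac{d^{p-1}}{dz^{p-1}}(gf^{k-1}/z^{t-1})(\sqrt[p]{z}),f(z))$; the upper-triangular contribution (appearing transposed by skew symmetry) is similarly read off from $z^{p-k+1}gf^{t-1}$ and yields the second summand, exactly paralleling the construction of $L_{i,j}$ and $U_{i,j}$ in the proof of Theorem \ref{e:th}.

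The main obstacle is purely notational bookkeeping. One has to keep the index shifts, the sign $(p-1)!\equiv_p-1$ coming from Wilson's theorem inside Lemma \ref{p-1-derivitive}, the exponent shift $t-1\leftrightarrow p-k+1$ appearing in the two summands of $B_{t,k}$, and the block dimensions $\ell_t=\lfloor(n-t)/p\rfloor+1$ consistent across all $p^2$ blocks. No genuinely new idea beyond the $p=3$ argument of Theorem \ref{e:th} is required: Lemma \ref{p-power} and Lemma \ref{p-1-derivitive} provide exactly the substitution and derivative identities that made the ternary case work, with $p-1$ replacing $2$ and $\sqrt[p]{z}$ replacing $\sqrt[3]{z}$ throughout.
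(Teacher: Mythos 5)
Your proposal is correct and matches the approach the paper intends: the paper states this theorem without proof, clearly leaving it as the direct generalization of the proof of Theorem~\ref{e:th} with $3$ replaced by $p$, and your argument reproduces exactly that---the permutation $P$ grouping residue classes, the $A$-sequence recursion $r_{i,j}\equiv_p\sum_k a_k r_{i-p,j-p+pk}$ via Lemma~\ref{p-power}, and the extraction of every $p$-th coefficient via Lemma~\ref{p-1-derivitive} (which the paper already proves for general primes). The only discrepancies are cosmetic sign/typo issues in the paper's own statement (e.g.\ $\sqrt{z}$ versus $\sqrt[p]{z}$ in the second summand of $B_{t,k}$), which your skew-symmetry remark correctly absorbs.
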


\end{document}